\tikzstyle{vertex}=[auto=left,circle,draw=black,fill=white, inner sep=1.5]
\newtheorem{theorem}{Theorem}[section]
\newtheorem{lemm}{Lemma}[section]
\newtheorem{ex}{Example}[section]
\title{Enumeration of Switching Non-isomorphic Signed Wheels}
\author{ Deepak Sehrawat\\
Department of Mathematics\\
Indian Institute of Technology Guwahati\\
Guwahati, India - 781039\\
Email: deepakmath55555@iitg.ac.in\\
\\
Bikash Bhattacharjya\\
Department of Mathematics\\
Indian Institute of Technology Guwahati\\
Guwahati, India - 781039\\
Email: b.bikash@iitg.ac.in
}
\date{}
\begin{document}
\maketitle

\vspace{-0.3in}

\vspace*{0.3in}
\noindent
\textbf{Abstract.} Two signed graphs are called switching isomorphic to each other if one is isomorphic to a switching of the other. The wheel $W_n$ is the join of the cycle $C_n$ and a vertex. For $0 \leq p \leq n$, $\psi_{p}(n)$ is defined to be the number of switching non-isomorphic signed $W_n$ with exactly $p$ negative edges on $C_n$. The number of switching non-isomorphic signed $W_n$ is denoted by $\psi(n)$. In this paper, we compute the values of $\psi_{p}(n)$ for $p=0,1,2,3,4,n-4,n-3,n-2,n-1,n$ and of $\psi(n)$ for $n=4,5,...,10$.

\noindent {\textbf{Keywords}: signed wheel; switching isomorphism; switching isomorphism. 
 
\section{Introduction}\label{introduction} 

A \textit{signed graph}, denoted by $\Sigma=(G,\sigma)$, is a graph consisting of an ordinary graph $G$ and a sign function $\sigma : E(G) \rightarrow \{+1,-1\}$ which labels each edge of $G$ as positive or negative. In $\Sigma=(G,\sigma)$, $G$ is called the \textit{underlying graph} of $\Sigma$ and the set $\sigma^{-1}(-1)=\{e\in E(G)~|~\sigma(e)=-1\}$ is called the \textit{signature} of $\Sigma.$ \textit{Switching} $\Sigma$ by a vertex $u$ changes the sign of each edge incident to $u$. Two signed graph $\Sigma_1 =(G,\sigma_1)$ and $\Sigma_2 = (G,\sigma_2)$ are called \textit{switching equivalent} if one can be obtained by a sequence of switchings from the other.   

The following characterization of two signed graphs to be switching equivalent is given by Zaslavsky~\cite{Zaslavsky}.

\begin{lemm}\cite{Zaslavsky}\label{two SGs equivalence}
Two signed graphs $\Sigma_1,\Sigma_2$ are switching equivalent if and only if they have the same set of negative cycles.
\end{lemm}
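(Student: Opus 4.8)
The plan is to prove the two implications separately, with the forward direction (necessity) being routine and the reverse direction (sufficiency) carrying the real content. Throughout I work with the \emph{sign} of a cycle $C$, namely the product $\prod_{e \in C}\sigma(e) \in \{+1,-1\}$, so that ``$C$ is a negative cycle'' means this product equals $-1$.

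For necessity, I would first establish that switching at a single vertex $u$ preserves the sign of every cycle. The key observation is that a cycle $C$ either avoids $u$ entirely, in which case none of its edge signs change, or passes through $u$ exactly once, meeting precisely two of its edges at $u$; switching then flips exactly those two signs, leaving the product $\prod_{e\in C}\sigma(e)$ unchanged. Since any switching is a composition of single-vertex switchings, the sign of each cycle is a switching invariant, and therefore the set of negative cycles is the same for switching-equivalent graphs.

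For sufficiency, suppose $\Sigma_1 = (G,\sigma_1)$ and $\Sigma_2 = (G,\sigma_2)$ have identical sets of negative cycles. I would reduce the problem to producing a \emph{switching function}, i.e. a map $\zeta : V(G) \to \{+1,-1\}$ satisfying $\sigma_2(uv) = \zeta(u)\,\sigma_1(uv)\,\zeta(v)$ for every edge $uv$. Setting $\delta(uv) := \sigma_1(uv)\,\sigma_2(uv)$, this is equivalent to finding $\zeta$ with $\zeta(u)\zeta(v) = \delta(uv)$ on every edge. The hypothesis translates into the statement that $\delta$ is \emph{balanced}: for each cycle $C$ the two signed graphs assign the same sign, so $\prod_{e\in C}\delta(e) = \left(\prod_{e\in C}\sigma_1(e)\right)\left(\prod_{e\in C}\sigma_2(e)\right) = +1$. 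Thus the whole matter becomes the classical fact that a balanced signature is switching equivalent to the all-positive one, now applied to $\delta$.

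To construct $\zeta$, I would assume $G$ connected (handling components independently otherwise), fix a spanning tree $T$ rooted at a vertex $r$, set $\zeta(r) = +1$, and define $\zeta(v) = \prod_{e \in P_v}\delta(e)$, where $P_v$ is the unique $r$-to-$v$ path in $T$. A telescoping argument then gives $\zeta(u)\zeta(v) = \delta(uv)$ automatically on every tree edge. The crux---and the step I expect to be the main obstacle---is verifying the same identity on the non-tree edges, since this is exactly where the cycle hypothesis is consumed: each non-tree edge $uv$ closes a unique cycle with the tree path joining $u$ and $v$, and balance of $\delta$ around that cycle forces $\delta(uv) = \prod_{f \in P_{uv}^{T}}\delta(f) = \zeta(u)\zeta(v)$, after the common initial segments of $P_u$ and $P_v$ cancel in pairs. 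The delicate bookkeeping is precisely tracking which edges cancel in the product $\zeta(u)\zeta(v)$ and confirming that the fundamental cycles of $T$ suffice to enforce the condition on all remaining edges. Once this is in place, $\zeta$ realizes $\Sigma_2$ as a switching of $\Sigma_1$, completing the proof.
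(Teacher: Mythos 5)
Your proposal is correct, but note that the paper itself offers no proof of this lemma at all: it is quoted from Zaslavsky's work as a known result, so there is no internal argument to compare yours against. What you have written is a complete and accurate rendition of the classical proof. The necessity direction (a single-vertex switching flips exactly two edge signs of any cycle through that vertex, hence preserves all cycle signs) is sound, and the sufficiency direction via the product signature $\delta(uv)=\sigma_1(uv)\sigma_2(uv)$, a rooted spanning tree, and the potential function $\zeta(v)=\prod_{e\in P_v}\delta(e)$ is exactly the standard switching-function argument; the step you flagged as the potential obstacle (non-tree edges) goes through precisely as you sketched, since each non-tree edge lies on its own fundamental cycle, whose $\delta$-balance, together with cancellation of the common initial segment of the two root paths, yields $\delta(uv)=\zeta(u)\zeta(v)$. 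One small point worth making explicit: switching by the vertex set $\zeta^{-1}(-1)$ must be identified with a sequence of single-vertex switchings to match the paper's definition of switching equivalence, but this is immediate since the effect of switching a set of vertices is the composition of the individual switchings in any order. In short, your proof is self-contained where the paper relies on a citation, and it is essentially the proof found in the cited literature.
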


Given a graph $G$ on $n$ vertices and $m$ edges, there are $2^m$ ways of constructing signed graphs on $G$. An elementary proof of the following lemma is given in \cite[Lemma 2.1]{Ramezani}.

\begin{lemm}\label{lemma of switching non-equivalent SGs}
There are $2^{m-n+1}$ switching non-equivalent signed graphs on a connected graph $G$ on $n$ vertices and $m$ edges.  
\end{lemm}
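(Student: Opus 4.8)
The plan is to realize switching as a linear group action over $\Fl_2$ and then count orbits. I would encode a signature $\sigma$ as the vector in $\Fl_2^{E(G)}$ that is $0$ on positive edges and $1$ on negative edges; there are exactly $2^m$ such vectors. Switching $\Sigma$ by a vertex set $S \subseteq V(G)$ flips precisely the edges of the cut $\partial S = [S,\, V(G)\setminus S]$, so in this encoding it adds the indicator vector $\chi_{\partial S} \in \Fl_2^{E(G)}$ to $\sigma$. Thus two signatures on $G$ are switching equivalent if and only if they differ by such a cut vector.

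Next I would observe that the collection $\mathcal{B} = \{\chi_{\partial S} : S \subseteq V(G)\}$ is a subspace of $\Fl_2^{E(G)}$ — the \emph{cut space} of $G$ — since $\chi_{\partial S} + \chi_{\partial S'} = \chi_{\partial(S \triangle S')}$. Consequently the switching equivalence classes are exactly the cosets of $\mathcal{B}$ in $\Fl_2^{E(G)}$, and their number is $2^m / |\mathcal{B}| = 2^{\,m - \dim \mathcal{B}}$. So the whole statement reduces to computing $\dim \mathcal{B}$.

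The crux is to show $\dim \mathcal{B} = n-1$ when $G$ is connected. The assignment $S \mapsto \chi_{\partial S}$ is a linear map $\Fl_2^{V(G)} \to \Fl_2^{E(G)}$ whose image is $\mathcal{B}$, so by rank–nullity $\dim \mathcal{B} = n - \dim(\ker)$. I would identify the kernel directly: $\partial S = \emptyset$ means every edge has either $0$ or $2$ endpoints in $S$, i.e.\ $S$ is a union of connected components of $G$; for connected $G$ the only such sets are $\emptyset$ and $V(G)$, giving $\dim(\ker) = 1$ and hence $\dim \mathcal{B} = n-1$. Substituting yields $2^{\,m-(n-1)} = 2^{\,m-n+1}$ classes, as claimed.

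The main obstacle is this kernel computation — equivalently, showing that no nontrivial proper switching set acts trivially on the edge signs — and it is exactly the step where connectivity is essential. I would state it as: the cut map has a one-dimensional kernel precisely because $G$ has a single component. As an alternative that makes the same point combinatorially, one may fix a spanning tree $T$ and argue that every switching class has a \emph{unique} representative that is positive on all $n-1$ edges of $T$: existence follows by switching leaves toward a fixed root to clear tree-edge signs one at a time, and uniqueness holds because any nonempty proper cut of a connected graph must contain at least one edge of $T$. This directly exhibits $2^{m-(n-1)}$ classes, matching the linear-algebraic count.
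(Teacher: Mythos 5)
You cannot be matched against the paper's own proof here, because the paper gives none: Lemma~\ref{lemma of switching non-equivalent SGs} is quoted with a pointer to \cite[Lemma 2.1]{Ramezani}, so your argument has to stand as a self-contained replacement, and it does. The main line is correct and complete: switching by a sequence of vertices amounts to adding the cut vector $\chi_{\partial S}$ of the set $S$ of vertices switched an odd number of times; the cut vectors form a subspace $\mathcal{B}$ of $\mathbb{F}_2^{E(G)}$ via $\chi_{\partial S}+\chi_{\partial S'}=\chi_{\partial(S\triangle S')}$; the classes are the cosets of $\mathcal{B}$; and rank--nullity applied to $S\mapsto\chi_{\partial S}$, whose kernel consists of unions of connected components (hence only $\emptyset$ and $V(G)$ when $G$ is connected), gives $\dim\mathcal{B}=n-1$ and the count $2^{m-n+1}$. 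It is worth noting how this dovetails with the tool the paper actually uses: $\mathcal{B}$ is the orthogonal complement of the cycle space, whose dimension $m-n+1$ is exactly your exponent, and cosets of $\mathcal{B}$ correspond to assignments of signs to cycles; this is the linear-algebra form of Lemma~\ref{two SGs equivalence} (classes are determined by their negative cycles), which is what the paper invokes for all its non-equivalence arguments.

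One slip in your optional combinatorial alternative: processing \emph{leaves toward the root} does not clear tree edges one at a time. If you first switch a leaf $v$ to make its parent edge $pv$ positive, and later switch the parent $p$ to fix $p$'s own parent edge, the edge $pv$ flips back to negative. The fix is to process top-down: order the vertices by increasing distance from the root; when $v$ with parent $p$ is treated, every later switching happens at a vertex that is an endpoint of neither $p$ nor $v$'s edge $pv$, so cleared edges stay cleared. Equivalently, switch in a single step the set of vertices whose tree path from the root has negative sign product. The uniqueness half of your tree argument is fine as stated, since every nonempty proper vertex cut of a connected graph contains an edge of any spanning tree; with the existence step repaired, that alternative also yields the bijection with the $2^{m-(n-1)}$ sign patterns on non-tree edges.
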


We say the signed graphs $\Sigma_1 =(G,\sigma_1)$ and $\Sigma_2 = (H,\sigma_2)$ are \textit{isomorphic} if there exists a graph isomorphism between $G$ and $H$ preserving the edge signs. Two signed graphs are \textit{switching isomorphic} if one is isomorphic to a switching of the other. Two signed graphs $(G,\sigma_1)$ and $(H,\sigma_2)$ are \textit{automorphic} if they are isomorphic to each other and $G=H$.

Up to switching isomorphism, it is known that there are two signed $K_3$, three signed $K_4$, and seven signed $K_5$. In \cite{Deepak}, the authors classified all sixteen switching non-isomorphic signed $K_6$. Mallows and Sloane~\cite{Mallows} proved that the number of switching non-isomorphic signed complete graphs on $n$ vertices is equal to the number of Euler graphs on $n$ vertices. In~\cite{Zaslavsky}, Zaslavsky proved that there are only six signed Petersen graphs, up to switching isomorphism.

Recently, Y. Bagheri et al.~\cite{Ramezani} proved that the number of mutually switching non-isomorphic signed graphs associated with a given graph $G$ is equal to the number of orbits of the automorphism group of $G$ acting on the set of all possible signed graphs with underlying graph $G$. In this paper, we have used a different technique to determine the number of switching non-isomorphic signed wheels of some particular orders.

A \textit{wheel}, denoted by $W_n$, is the join of the cycle $C_n$ and a vertex. Let $V(W_n)=\{v,v_1,v_2,...,v_n\}$ and $E(W_n)=\{vv_i, v_iv_{i+1}~|~i=1,2,...,n\}$, where the subscripts are read modulo $n$. For $1 \leq i \leq n$, the edges $vv_i$ are said to be the \textit{spokes} of $W_n$, and the cycle induced by edges $v_iv_{i+1}$ is said to be the \textit{outer cycle}, denoted by $C_n$, of $W_n$. 

For $n=3$, the graph $W_3$ is the complete graph $K_4$. It is known that the number of switching non-isomorphic signed graphs over $K_4$ is 3. Thus, in the subsequent discussion, we consider the wheels $W_n$ for $n \geq 4$. 

For a fixed $0 \leq p \leq n$, $\psi_{p}(n)$ denotes the number of switching non-isomorphic signed wheels with exactly $p$ negative edges on $C_n$. By $\psi(n)$, we denote the number of switching non-isomorphic signed wheels. Thus, $\psi(n) = \sum_{p=0}^{n} \psi_{p}(n)$.

The values of $\psi_{p}(n)$ for $p=0,1,2,3,4,n-4,n-3,n-2,n-1,n$ are determined in Section~\ref{main results}. Using these values, the values of $\psi(n)$ for $n \leq 10$ are obtained in Section~\ref{exact values}.
 
\section{Terminology and Methodology}\label{methodology} 

Our approach to enumerate the switching non-isomorphic signed wheels is to put $p$ negative edges on $C_n$ at different distances that generate all mutually switching non-isomorphic signed wheels.

By $G_n$, we denote a $regular~n\text{-}gon$ having vertex set $V(G_n)=\{v_{1},v_{2},v_{3},...,v_{n}\}$ and edge set $E(G_n) = \{v_{i}v_{i+1}~|~i=1,2,...,n\}$, where the subscripts are read modulo $n$. 

The \textit{distance} between two vertices $u~\text{and}~v$, denoted $d(u,v)$, in a graph $G$ is defined to be the number of edges in a shortest path between $u~\text{and}~v$. In $G_n$, it is clear that $1 \leq d(v_i,v_j) \leq \lfloor \frac{n}{2} \rfloor$ for all $i \neq j$. Further, if we measure the distance along one particular direction (in clockwise or anticlockwise), then we have $1 \leq d(v_i,v_j) \leq n-1$ for all $i \neq j$. 

If $n$ is an even number then the vertices $v_{i}$ and $v_{i+ \frac{n}{2}}$ are called \textit{diagonally opposite vertices} and the edges $v_{i}v_{i+1}$ and $ v_{i+ \frac{n}{2}}v_{i+1+\frac{n}{2}}$ are called the \textit{opposite edges}. On the other hand, if $n$ is an odd number, for any $v_{i}$, the edge $v_{i+\lfloor \frac{n}{2} \rfloor}v_{i+(\lfloor \frac{n}{2} \rfloor+1)}$ is called the \textit{opposite edge} of $v_{i}$, $1 \leq i \leq n$.

Clearly, $G_n$ features $n$ axes of symmetry. A common point at which all these axes meet is called the \textit{center} of $G_n$. Observe that if $n$ is an even number then half of the axes pass through diagonally opposite vertices and the remaining axes pass through the midpoints of opposite edges. On the other hand, if $n$ is an odd number, all the axes pass through a vertex and the midpoint of its opposite edge. 

Let Aut($G$) denotes the automorphism group of a graph $G$. It is well known that Aut($W_n$) = Aut($G_n$)  = $<\alpha, \beta~|~ \alpha^n = \beta^2 = 1, \alpha\beta = \beta\alpha^{-1}>$, the dihedral group $D_n$.

If a spoke $vv_{j}$, for some $1\leq j \leq n$, is negative in $(W_n, \sigma)$ then one can make it positive by switching by $v_j$. Thus for any $(W_n, \sigma)$ there is an equivalent $(W_n, \sigma_1)$ such that $\sigma_1^{-1}(-1) \subseteq E(C_{n})$. Therefore, in the subsequent discussion, we only consider the signed wheels whose negative edges lie on $C_n$. 

The following result will be helpful to examine whether two signed wheels are switching equivalent.

\begin{lemm}\label{switching non-equi wheels}
Two signed wheels with different signatures are always switching non-equivalent.
\end{lemm}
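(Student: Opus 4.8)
The plan is to prove the contrapositive via Zaslavsky's characterization (Lemma~\ref{two SGs equivalence}): I will show that if two of our signed wheels are switching equivalent, then their signatures must coincide. Recall the standing normalization that every spoke $vv_i$ is positive, so that $\sigma^{-1}(-1)\subseteq E(C_n)$ for both signatures under consideration. The whole argument rests on the observation that, with positive spokes, the sign of any cycle is precisely the product of the signs of its outer edges, since each spoke contributes a factor of $+1$.

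The key step is to isolate the individual outer edges by means of the triangles. For each $i$, let $T_i$ denote the triangle $v\,v_i\,v_{i+1}$ consisting of the two spokes $vv_i,\,vv_{i+1}$ and the outer edge $v_iv_{i+1}$. Because both spokes are positive, the sign of $T_i$ equals exactly $\sigma(v_iv_{i+1})$; thus $T_i$ is negative if and only if the outer edge $v_iv_{i+1}$ is negative. Now suppose $(W_n,\sigma_1)$ and $(W_n,\sigma_2)$ are switching equivalent. By Lemma~\ref{two SGs equivalence} they share the same set of negative cycles, so in particular $T_i$ is negative under $\sigma_1$ if and only if it is negative under $\sigma_2$. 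Reading this off through the previous equivalence gives $\sigma_1(v_iv_{i+1})=\sigma_2(v_iv_{i+1})$ for every $i$; together with the positivity of all spokes this forces $\sigma_1=\sigma_2$, which is the desired contrapositive.

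I do not anticipate a genuine obstacle here: the one point requiring care is that the triangle signs isolate single outer edges only because of the normalization that all spokes are positive, so I would be explicit that this is consistent with the standing assumption preceding the lemma. As a sanity check — and an alternative route — one may invoke Lemma~\ref{lemma of switching non-equivalent SGs}: since $W_n$ has $n+1$ vertices and $2n$ edges, there are exactly $2^{\,2n-(n+1)+1}=2^n$ switching equivalence classes of signed $W_n$. Every class admits a representative whose negative edges lie on $C_n$, and there are precisely $2^n$ such signatures; hence the assignment of a signature on $C_n$ to its switching class is a bijection, and distinct signatures necessarily lie in distinct classes.
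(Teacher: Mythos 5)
Your main argument is correct and is essentially the paper's own proof in contrapositive form: the paper likewise observes that, under the standing normalization that all spokes are positive, each negative outer edge makes exactly one triangle negative, and then concludes directly from Lemma~\ref{two SGs equivalence}. Your closing counting argument via Lemma~\ref{lemma of switching non-equivalent SGs} is a valid independent check (surjectivity of a map between two sets of size $2^n$ forces injectivity), but the core reasoning matches the paper's.
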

\begin{proof}
Let $\Sigma_1=(W_n,\sigma_1)$ and $\Sigma_2=(W_n,\sigma_2)$ be two signed wheels such that $\sigma_{1}^{-1}(-1) \neq \sigma_{2}^{-1}(-1)$. Since each negative edge makes exactly one triangle negative, the result follows directly from Lemma~\ref{two SGs equivalence}.
\end{proof}
 
Let $\Sigma=(W_n,\sigma)$ be a signed wheel with $p$ negative edges. Corresponding to $\Sigma$, we associate an ordered \textit{distance tuple} $D(\Sigma)=(r_{0},r_{1},r_{2},r_{3},...,r_{\lfloor \frac{n}{2} \rfloor})$, where $r_{l}$ denotes the number of distinct pairs of negative edges which are at distance $l$ and $r_{0}+r_{1}+r_{2}+r_{3}+\cdots +r_{\lfloor \frac{n}{2} \rfloor} = \binom{p}{2}$.

\begin{ex}
\rm{Consider $\Sigma = (W_8,\sigma)$, as depicted in Figure~\ref{signed wheel W_8}. Let $e_1= v_1v_2, e_2=v_2v_3, e_3=v_4v_5, e_4=v_7v_8$ so that $\sigma^{-1}(-1)= \{e_1,e_2,e_3,e_4\}$. It is easy to see that $d(e_1,e_2)=0,~d(e_1,e_3)=2,~d(e_1,e_4)=1,~d(e_2,e_3)=1,~\text{dist}(e_2,e_4)=2,~d(e_3,e_4)=2$. Therefore, $r_{0}=1,~r_{1}=2,~r_{2}=3,~r_{3}=0,~r_{4}=0$. Hence we have $D(\Sigma) = (1,2,3,0,0)$.}
\end{ex}

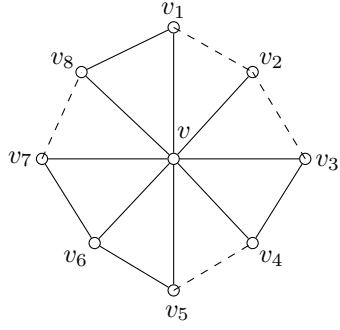
\begin{figure}[ht]
\begin{center}
\begin{tikzpicture}[scale=0.7]

\node[vertex] (v1) at (10,0) {};
\node[vertex] (v2) at (11.50,0.90) {};
\node[vertex] (v3) at (12.5,2.5) {};
\node[vertex] (v4) at (11.50,4.15) {};
\node[vertex] (v5) at (10,5) {};
\node[vertex] (v6) at (8.25,4.15) {};
\node[vertex] (v7) at (7.5,2.5) {};
\node[vertex] (v8) at (8.50,0.90) {};
\node[vertex] (v9) at (10,2.5) {};

\node [above] at (v5) {$v_{1}$};
\node [xshift=0.25cm, yshift=0.15cm] at (v4) {$v_{2}$};
\node [xshift=0.3cm, yshift=-0.05cm] at (v3) {$v_{3}$};
\node [xshift=0.25cm, yshift=-0.2cm] at (v2) {$v_{4}$};
\node [xshift=0.05cm, yshift=-0.3cm] at (v1) {$v_{5}$};
\node [xshift=-0.25cm, yshift=-0.2cm] at (v8) {$v_{6}$};
\node [left] at (v7) {$v_{7}$};
\node [xshift=-0.25cm, yshift=0.15cm] at (v6) {$v_{8}$};
\node at (10.2,3) {$v$};

\foreach \from/\to in {v2/v3,v5/v6,v7/v8,v8/v1,v9/v1,v9/v2,v9/v3,v9/v4,v9/v5,v9/v6,v9/v7,v9/v8} \draw (\from) -- (\to); 

\draw[dashed] (10,0) -- (11.50,0.90);
\draw[dashed] (10,5) -- (11.50,4.15);
\draw[dashed] (12.5,2.5) -- (11.50,4.15);
\draw[dashed] (7.5,2.5) -- (8.25,4.15);
\end{tikzpicture}
\end{center}
\caption{A signed $W_8$} \label{signed wheel W_8} 
\end{figure}

The following lemma will help us in deciding whether two signed wheels with $p$ negative edges are automorphic.  

\begin{lemm}\label{key lemma}
Two signed wheels $\Sigma_1$ and $\Sigma_2$ with $p$ negative edges are automorphic to each other if and only if $D(\Sigma_1) = D(\Sigma_2)$.
\end{lemm}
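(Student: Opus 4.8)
The plan is to recast the statement as a question about the action of $\mathrm{Aut}(W_n)=D_n$ on the $p$-element subsets of $E(C_n)$. Since we have already reduced to signatures supported on the outer cycle, and every automorphism of $W_n$ fixes the hub $v$ and restricts to a symmetry of $G_n$, two such signed wheels are automorphic exactly when their sets of negative edges lie in one $D_n$-orbit. I would label the edge $v_iv_{i+1}$ by $i\in\mathbb{Z}_n$, so that a signature becomes a $p$-subset $S\subseteq\mathbb{Z}_n$, the group $D_n$ acts on these labels as the usual dihedral symmetries, and the edge distance of the excerpt becomes the induced cyclic distance between labels. Writing $S=\{a_1<\cdots<a_p\}$ with cyclic gaps $g_t=a_{t+1}-a_t$ (indices mod $p$, so $g_1+\cdots+g_p=n$), the $D_n$-orbit of $S$ is precisely the cyclic gap word $(g_1,\dots,g_p)$ up to rotation and reversal, while $D(\Sigma)$ is the histogram recording how many of the $\binom{p}{2}$ pairs fall at each distance.

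The forward implication is the easy half. Every element of $D_n$ is an isometry of $G_n$, hence preserves the edge distance $d(\cdot,\cdot)$; consequently it preserves, for each $\ell$, the number of pairs of negative edges at distance $\ell$. Thus $D(\Sigma)$ is constant on $D_n$-orbits, and two automorphic signed wheels must satisfy $D(\Sigma_1)=D(\Sigma_2)$.

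For the converse I must manufacture, from the hypothesis $D(\Sigma_1)=D(\Sigma_2)$, an explicit $\phi\in D_n$ carrying the first signature onto the second. The natural route is reconstruction: show that the distance histogram determines the cyclic gap word $(g_1,\dots,g_p)$ up to rotation and reversal, for then the matching rotation or reflection supplies the required $\phi$. Concretely, each pairwise distance is a folded partial sum $\min\!\bigl(g_t+\cdots+g_{t+k},\,n-(g_t+\cdots+g_{t+k})\bigr)$ of consecutive gaps, and I would attempt to peel these off by recovering the longest arcs first and reassembling the gaps around the cycle.

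The main obstacle is exactly the uniqueness of this reconstruction. Rebuilding a set on a cycle from its multiset of pairwise distances is the \emph{beltway problem}, and in general its solution is not unique up to congruence: there exist \emph{homometric} sets, that is, non-congruent subsets sharing an identical distance histogram. So the crux is to show that no such ambiguity can occur for the configurations at hand. I expect this to be discharged by leaning on the specific (small) values of $p$ actually needed — equivalently, by complementation, small $n-p$ — where the partitions of $n$ into $p$ gaps are few enough to list and one checks directly that non-congruent gap words produce distinct histograms. A fully general argument would instead fix a canonical monotone reading of the gap word off the distances and prove it well defined; pinning down that canonical order, and proving the distances determine it, is the point I expect to demand the most care.
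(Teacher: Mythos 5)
Your forward implication is correct and coincides with the paper's: for $n\ge 4$ every automorphism of $W_n$ fixes the hub, restricts to a dihedral symmetry of $C_n$, and therefore preserves the distance histogram. The converse, however, you do not actually prove: you reduce it to the beltway problem, observe that a subset of a cycle need not be determined up to congruence by its multiset of folded pairwise distances (homometric sets), and defer the crux either to an unspecified canonical reconstruction of the gap word or to a finite check for the small values of $p$ actually needed. That deferred step cannot be carried out, because the converse is false, and it fails exactly at a value of $p$ the paper uses. Take $n=8$, $p=4$, with $\sigma_1^{-1}(-1)=\{v_1v_2,v_2v_3,v_3v_4,v_6v_7\}$ and $\sigma_2^{-1}(-1)=\{v_1v_2,v_2v_3,v_4v_5,v_5v_6\}$. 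Computing distances as in the paper's worked example, both signed wheels have $r_0=2$, $r_1=1$, $r_2=2$, $r_3=1$, $r_4=0$, so $D(\Sigma_1)=D(\Sigma_2)=(2,1,2,1,0)$; yet they are not automorphic, since a dihedral symmetry of $C_8$ preserves the property of having three consecutive negative edges, which $\Sigma_1$ has and $\Sigma_2$ lacks. In your gap-word language, the two configurations have gap multisets $\{1,1,3,3\}$ and $\{1,1,2,4\}$, so the histogram does not even determine the multiset of gaps, and no canonical reading of the gap word off the distances can be well defined. Your small-$p$ fallback dies on the same example, since $p=4$ (equivalently $n-4$) is required, and indeed these two signatures are among the representatives produced by Lemma~\ref{lemma 2 of S1} and Lemma~\ref{lemma 3 of S1}.

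For what it is worth, the paper's own proof of the converse breaks at precisely the point you flagged: the claim surrounding Equation~(\ref{equation 1}), that equal histograms allow the negative edges of $\Sigma_2$ to be listed clockwise with all pairwise distances agreeing index-by-index with those of $\Sigma_1$, is exactly the uniqueness-of-reconstruction assertion, and for the pair above no such listing exists (moreover, the map $\phi$ defined there is a pure rotation, so even granting Equation~(\ref{equation 1}) it could not handle signatures that are mirror images but not rotations of one another). The lemma should be weakened to its forward implication, which is all that Lemma~\ref{main lemma} requires; and the enumeration is better justified by replacing histogram comparison with congruence: since automorphisms fix the hub, and since by Lemma~\ref{switching non-equi wheels} two signatures on $C_n$ that are switching equivalent must be equal, two signed wheels with signatures on the outer cycle are switching isomorphic if and only if their signatures lie in the same $D_n$-orbit. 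The representatives constructed in Lemmas~\ref{lemma 1 of S1}--\ref{lemma2 p=4} are pairwise non-congruent by construction, so the counts themselves survive; it is only the stated justification via Lemma~\ref{main lemma} and the present lemma that does not.
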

\begin{proof}
Let $\Sigma_1$ and $\Sigma_2$ be automorphic to each other. Since an automorphism of $W_n$ preserve the distance, it follows that $D(\Sigma_1) = D(\Sigma_2)$.

\noindent
Conversely, let $\Sigma_1$ and $\Sigma_2$ be two signed wheels with $p$ negative edges such that $D(\Sigma_1) = D(\Sigma_2)$. We need to show that $\Sigma_1$ and $\Sigma_2$ are automorphic to each other. To establish an automorphism of $W_n$ that maps $\Sigma_1$ onto $\Sigma_2$, we first fix the position of $p$ negative edges of $\Sigma_1$ in clockwise direction, say, at $v_{1_1}v_{1_1 +1}, v_{1_2}v_{1_2 +1},v_{1_3}v_{1_3 +1},...,v_{1_p}v_{1_p +1}$ such that $1 \leq 1_i < 1_j \leq n$ for $1 \leq i < j \leq p$.

Since $D(\Sigma_1) = D(\Sigma_2)$, the positions of $p$ negative edges of $\Sigma_2$ can also be fixed in clockwise direction say, at $v_{2_1}v_{2_1+1}, v_{2_2}v_{2_2+1},v_{2_3}v_{2_3 +1},...,v_{2_p}v_{2_p +1}$, where $1 \leq 2_{i} \leq n$ and subscripts are read modulo $n$, so that
\begin{equation}\label{equation 1}
d(v_{1_i}v_{1_i +1},v_{1_j}v_{1_j +1})=d(v_{2_i}v_{2_i+1},v_{2_j}v_{2_j +1}),~\text{ for all}~ i,j\in \{1,2,...,p\}.
\end{equation}

Define $\phi : V(W_n) \rightarrow V(W_n)$ by \\
$$\phi(x) = \begin{cases} v & \text{if $x=v$} \\
v_{2_1 + t} & \text{if $x=v_{1_1 + t}$ for $t=0,1,2,...,n-1$}.
\end{cases}$$

It is easy to verify that $\phi$ is an automorphism of $W_n$ that maps $\Sigma_1$ onto $\Sigma_2$. Hence if $D(\Sigma_1) = D(\Sigma_2)$ then $\Sigma_1$ and $\Sigma_2$ are automorphic to each other.
\end{proof}

Lemma~\ref{switching non-equi wheels} and Lemma~\ref{key lemma} together yields the following result. 

\begin{lemm}\label{main lemma}
Let $\Sigma_1$ and $\Sigma_2$ be two signed wheels with $p$ negative edges such that $D(\Sigma_1) \neq D(\Sigma_2)$. Then $\Sigma_1$ and $\Sigma_2$ are switching non-isomorphic.
\end{lemm}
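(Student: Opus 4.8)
The plan is to prove the contrapositive: assuming $\Sigma_1$ and $\Sigma_2$ are switching isomorphic, I would show $D(\Sigma_1) = D(\Sigma_2)$, and the stated implication follows at once. Throughout I use the standing reduction (justified in the paragraph preceding Lemma~\ref{switching non-equi wheels}) that both $\Sigma_1$ and $\Sigma_2$ are taken in canonical form, i.e.\ all their negative edges lie on $C_n$.

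First I would unpack the hypothesis. By the definition of switching isomorphism, $\Sigma_1$ is isomorphic to some switching $\Sigma_2'$ of $\Sigma_2$. Since switching leaves the underlying graph unchanged, this isomorphism is a sign-preserving graph isomorphism $W_n \to W_n$, that is, an element $\tau \in \mathrm{Aut}(W_n) = D_n$ with $\tau(\Sigma_2') = \Sigma_1$. The key structural observation, valid for $n \geq 4$, is that the hub $v$ is the unique vertex of degree exceeding $3$, so every automorphism of $W_n$ fixes $v$ and maps the outer cycle $C_n$ onto itself; consequently $\tau$ (and $\tau^{-1}$) carries canonical signed wheels to canonical signed wheels.

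Next I would separate the switching part from the isomorphism part. Applying $\tau^{-1}$ gives $\tau^{-1}(\Sigma_1) = \Sigma_2'$, which is switching equivalent to $\Sigma_2$; moreover $\tau^{-1}(\Sigma_1)$ is canonical, because $\Sigma_1$ is and $\tau^{-1}$ preserves canonical form. Thus $\tau^{-1}(\Sigma_1)$ and $\Sigma_2$ are two canonical signed wheels that are switching equivalent. By Lemma~\ref{switching non-equi wheels}, two signed wheels with different signatures are switching non-equivalent, so (contrapositively) these two must share the same signature; that is, $\tau^{-1}(\Sigma_1) = \Sigma_2$ as signed graphs. Hence $\Sigma_1$ and $\Sigma_2$ are automorphic via $\tau$, and since both carry exactly $p$ negative edges, Lemma~\ref{key lemma} yields $D(\Sigma_1) = D(\Sigma_2)$, which completes the contrapositive and therefore the lemma.

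I expect the only delicate point to be the clean separation of the two operations comprising a switching isomorphism: one must argue that, once canonical representatives are fixed, the genuine switching component is forced to be trivial. This rests on the uniqueness of the canonical representative within each switching class (a consequence of Lemma~\ref{switching non-equi wheels}) together with the fact that, for $n \geq 4$, automorphisms of $W_n$ preserve the spoke/rim partition and hence the canonical form. The degenerate case $n = 3$, where $W_3 = K_4$ and the hub is no longer distinguished, is excluded by the standing assumption $n \geq 4$.
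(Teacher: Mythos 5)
Your proof is correct and takes essentially the same route as the paper: the paper's entire proof is the assertion that Lemma~\ref{switching non-equi wheels} and Lemma~\ref{key lemma} together yield the result, and your contrapositive argument is exactly the detailed combination of those two lemmas (Lemma~\ref{switching non-equi wheels} forces the switching component to be trivial once canonical representatives with signatures on $C_n$ are fixed, and Lemma~\ref{key lemma} then equates the distance tuples). Your write-up merely supplies the details the paper leaves implicit, namely that for $n \geq 4$ every automorphism of $W_n$ fixes the hub and hence preserves canonical form.
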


\begin{lemm}\label{lemma having bounds on dist when p=4}
Among any four edges $e_1, e_2, e_3~\text{and}~e_4$ of $C_n$, there exist two edges $e_i$ and $e_j$ such that $\text{d}(e_i,e_j) \leq \lfloor \frac{n-4}{4} \rfloor$.
\end{lemm}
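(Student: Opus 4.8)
The plan is to prove this by a pigeonhole argument applied to the arcs into which the four chosen edges cut $C_n$. First I would make precise the one fact about edge-distance I need: as illustrated in the running example, the distance $d(e,f)$ between two edges of $C_n$ equals the number of edges lying strictly between $e$ and $f$ along a \emph{shorter} of the two arcs joining them. In particular, if $e$ and $f$ are separated by an arc containing exactly $a$ edges (none of which is $e$ or $f$), then $d(e,f)=\min(a,\,n-2-a)\le a$. This converts the genuine (minimizing) definition of distance into a bound by the edge-count of any single arc bounded by the two edges, which is all I will use.

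Next I would set up the combinatorial partition. Fix the four edges $e_1,e_2,e_3,e_4$ and list them in cyclic (say clockwise) order around $C_n$. Travelling once around the cycle, these four edges split the remaining $n-4$ edges into four consecutive arcs; let $a_1,a_2,a_3,a_4\ge 0$ denote the numbers of edges in these four arcs, so that
\[
a_1+a_2+a_3+a_4=n-4 .
\]
Each arc is bounded by a pair of cyclically consecutive chosen edges, and by the first step the distance between the two edges bounding the $k$-th arc is at most $a_k$.

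Then the pigeonhole step finishes the argument. Since the four nonnegative integers $a_1,\dots,a_4$ sum to $n-4$, their minimum is at most $\frac{n-4}{4}$, and because this minimum is itself an integer it is in fact at most $\big\lfloor \frac{n-4}{4}\big\rfloor$. Choosing a smallest arc, say the one with $a_k=\min_\ell a_\ell$, and letting $e_i,e_j$ be the two chosen edges bounding it, I obtain
\[
d(e_i,e_j)\le a_k\le \Big\lfloor \tfrac{n-4}{4}\Big\rfloor,
\]
as required.

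The only place demanding a little care — the ``main obstacle,'' though it is genuinely minor — is the first step: one must correctly reduce the distance, which is defined as a minimum over the two arcs, to an upper bound by a single arc's edge-count, and then make sure the averaging argument yields the integer bound $\lfloor (n-4)/4\rfloor$ rather than merely the real-valued estimate $(n-4)/4$. Once those two points are settled, everything else is routine counting.
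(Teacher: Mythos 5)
Your proof is correct and rests on the same idea as the paper's: the four edges partition the rest of $C_n$ into four arcs, and a counting/averaging step forces one arc (hence one pair of edges) to be short. The paper merely phrases this as a contradiction, assuming all pairwise distances are at least $\lfloor \frac{n-4}{4}\rfloor+1$ and counting vertices to exceed $n$, whereas you argue directly by pigeonhole on the arcs' edge-counts; the two arguments are contrapositives of one another.
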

\begin{proof}
For a fix $n$, let if possible 
\begin{equation}
d(e_i,e_j) \geq \lfloor \frac{n-4}{4} \rfloor + 1,~\text{for all}~ i,j \in \{1,2,3,4\},~ i \neq j.
\end{equation}
 Note that if the distance between $e_i~\text{and}~e_j$ is $k$ then there are at least $k-1$ vertices between end vertices of $e_i~\text{and}~e_j$. Therefore there are at least $\lfloor \frac{n-4}{4} \rfloor$ vertices between $e_i~\text{and}~e_j~\text{for all}~ i,j \in \{1,2,3,4\}$. This means there are at least $4\lfloor \frac{n-4}{4} \rfloor +8$ vertices in $C_n$, a contradiction. Hence the result follows.
\end{proof}

Let $e_1,e_2,e_3~\text{and}~e_4$ be four negative edges which lie on $C_n$. We place the edges $e_1,e_2,e_3~\text{and}~e_4$ in such a way that if $i < j$ and $e_i=v_{r}v_{r+1}, e_j=v_{l}v_{l+1}$ then $r+1 \leq l$. Further, in light of Lemma~\ref{lemma having bounds on dist when p=4}, we can always assume that $\text{d}(e_1,e_2) \leq \lfloor \frac{n-4}{4} \rfloor$. Without loss of generality, let $e_1=v_1v_2$. To calculate the value of $\psi_{4}(n)$, we will count the different signatures of size four by applying the following strategies. 

\begin{enumerate}
\item[S1.] Take $d(e_1,e_2)=0$ and count the different possibilities for $e_3~\text{and}~e_4$ up to automorphisms. This is carried out in Lemma~\ref{lemma 1 of S1}, Lemma~\ref{lemma 2 of S1}, Lemma~\ref{lemma 3 of S1} and Lemma~\ref{lemma 4 of S1}. 
\item[S2.] Take $d(e_1,e_2)=1$ and count the choices for $e_3~\text{and}~e_4$ under the following conditions:\\
(i) $d(e_2,e_3) \geq 1$;\\
(ii) $d(e_3,e_4) \geq 1$;\\
(iii) $d(e_4,e_1) \geq 1$.\\
Note that if any one of $d(e_2,e_3),d(e_3,e_4),d(e_4,e_1)$ is zero then replacement of those two edges with $e_1~\text{and}~e_2$ will give us a signature which is already encountered in S1. 

\item[S3.] For $d(e_1,e_2)= r$, where $1 \leq r < \lfloor \frac{n-4}{4} \rfloor$, count different choices of $e_3~\text{and}~e_4$.
\item[S4.] If $d(e_1,e_2)= r+1$, where $2 \leq r+1 \leq \lfloor \frac{n-4}{4} \rfloor$, count the different choices for $e_3~\text{and}~e_4$ under the following conditions:\\
(i) $d(e_2,e_3) \geq r+1$;\\
(ii) $d(e_3,e_4) \geq r+1$;\\
(iii) $d(e_4,e_1) \geq r+1$.\\
Note that if any one of $d(e_2,e_3),d(e_3,e_4),d(e_4,e_1)$ is less than $r+1$ then replacement of those two edges with $e_1~\text{and}~e_2$ will give us a signature which is already encountered in S3. 
\end{enumerate}

\section{Computation}\label{main results}

In this section, we compute the value of $\psi_{p}(n)$ for $p=0,1,2,3,4,n-4,n-3,n-2,n-1,~\text{and}~n$, where $n \geq 4$. To count the number of switching non-isomorphic signed wheels with $p$ negative edges, it is enough to count the different choices of $p$ edges from $E(C_n)$ up to automorphisms. Note that the counting of different $p$ edges on $C_n$ is same as the counting of different $n-p$ edges. Thus for any $0 \leq p \leq n$, we have 
\begin{equation}\label{main equation}
\psi_{p}(n) = \psi_{n-p}(n).
\end{equation}

The following lemma is trivial.
\begin{lemm}\label{case p=0}
For each $n \geq 4$, $\psi_{0}(n)= \psi_{n}(n) =1$.
\end{lemm}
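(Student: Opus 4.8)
The plan is to exploit the reduction established just before this lemma, namely that counting switching non-isomorphic signed wheels with exactly $p$ negative edges on $C_n$ amounts to counting the distinct $p$-element subsets of $E(C_n)$ up to the action of $\text{Aut}(W_n)=D_n$. The two cases $p=0$ and $p=n$ are precisely the degenerate ones in which this subset is forced, so I expect the argument to be essentially immediate.

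For $p=0$, I would first note that there is exactly one way to choose zero edges from $E(C_n)$, namely the empty set; the corresponding signed wheel is the one whose outer cycle carries no negative edge (after switching away any negative spokes, as permitted by the earlier discussion). Since there is a single such signature, it forms a single equivalence class, giving $\psi_{0}(n)=1$. If one prefers a fully formal justification, any two signed wheels with $p=0$ have the same distance tuple $D(\Sigma)=(0,0,\ldots,0)$, so by Lemma~\ref{key lemma} they are automorphic and hence switching isomorphic. For $p=n$, the quickest route is the symmetry relation $\psi_{p}(n)=\psi_{n-p}(n)$ of equation~\eqref{main equation}: taking $p=n$ gives $\psi_{n}(n)=\psi_{0}(n)=1$. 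Alternatively, one argues directly as in the case $p=0$, since there is a unique way to select all $n$ edges of $C_n$, yielding a single signature with every outer edge negative and therefore a single equivalence class.

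The statement carries no genuine obstacle; it is trivial precisely because both extreme values of $p$ leave no freedom in the choice of negative edges, so the orbit count is forced to be one. The only point demanding a little care is to invoke the correct earlier reduction, so that the claim ``$\psi_{0}(n)=1$'' is read as counting orbits of edge-subsets (equivalently, switching-isomorphism classes) rather than raw signatures.
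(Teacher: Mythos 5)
Your proposal is correct and matches the paper, which simply declares this lemma trivial without proof: the point in both cases is exactly the one you make, that $p=0$ and $p=n$ force a unique subset of $E(C_n)$ (the empty set or all of $E(C_n)$), hence a single switching-isomorphism class. Your optional appeals to Lemma~\ref{key lemma} and to the symmetry $\psi_{p}(n)=\psi_{n-p}(n)$ of Equation~\eqref{main equation} are consistent with the paper's surrounding machinery and merely formalize the same triviality.
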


Any two signed wheels with exactly one negative edge are automorphic to each other. Therefore, in the view of Equation~\ref{main equation}, the following lemma is immediate.

\begin{lemm}\label{case p=1}
For each $n \geq 4$, $\psi_{1}(n)=\psi_{n-1}(n)=1$.
\end{lemm}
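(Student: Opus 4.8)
The plan is to use Equation~\ref{main equation} to collapse the two asserted equalities into a single one, and then to recognize $\psi_{1}(n)=1$ as a trivial base case handled by the machinery already developed. Since $\psi_{1}(n)=\psi_{n-1}(n)$ holds by Equation~\ref{main equation}, it suffices to prove $\psi_{1}(n)=1$; equivalently, that every signed wheel with exactly one negative edge on $C_n$ belongs to a single switching isomorphism class.

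First I would note that the class is nonempty: placing the unique negative edge at $v_1v_2$ produces a valid signed wheel with one negative edge. Next I would show that any two such signed wheels are automorphic, via the distance tuple. When $p=1$ there are $\binom{1}{2}=0$ pairs of negative edges, so every coordinate $r_l$ of $D(\Sigma)=(r_0,r_1,\dots,r_{\lfloor n/2\rfloor})$ is forced to be $0$, and hence $D(\Sigma)=(0,0,\dots,0)$ irrespective of which outer edge carries the negative sign. Consequently any two signed wheels with a single negative edge share the same distance tuple, and Lemma~\ref{key lemma} immediately certifies that they are automorphic to one another.

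Finally, since automorphic signed graphs are in particular isomorphic, and isomorphic signed graphs are trivially switching isomorphic, all signed wheels with exactly one negative edge lie in one switching isomorphism class. This yields $\psi_{1}(n)=1$, and therefore $\psi_{n-1}(n)=1$ as well.

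I do not anticipate any genuine obstacle: the only point requiring verification is that some automorphism of $W_n$ carries any prescribed outer edge to any other, which is exactly what the vanishing distance tuple plus Lemma~\ref{key lemma} guarantees, and which also follows directly from the transitive action of $\mathrm{Aut}(W_n)=D_n$ on the edges of $C_n$. Thus the statement really is the immediate consequence the surrounding text advertises.
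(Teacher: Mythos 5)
Your proposal is correct and follows essentially the same route as the paper: the paper likewise reduces to $\psi_{1}(n)=1$ via Equation~\ref{main equation} and asserts that any two signed wheels with a single negative edge are automorphic, which you simply make explicit through the degenerate (all-zero) distance tuple and Lemma~\ref{key lemma}. Your closing observation that this is just the transitivity of $\mathrm{Aut}(W_n)=D_n$ on the edges of $C_n$ is exactly the fact the paper treats as immediate.
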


We now determine the value of $\psi_{n-2}(n)~\text{and}~\psi_{2}(n)$.
\begin{lemm}\label{case p=n-2}
For each $n \geq 4$, $\psi_{2}(n)=\psi_{n-2}(n)=1+\lfloor \frac{n-2}{2} \rfloor$.
\end{lemm}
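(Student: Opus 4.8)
The plan is to use Equation~\ref{main equation}, which gives $\psi_2(n)=\psi_{n-2}(n)$, and thereby reduce the problem to computing $\psi_2(n)$ alone. First I would argue that, restricted to signed wheels whose negative edges lie on $C_n$, switching isomorphism coincides with being automorphic: by Lemma~\ref{switching non-equi wheels} distinct signatures supported on $C_n$ are switching non-equivalent, so each switching class has a unique representative with all negative edges on $C_n$, and any automorphism of $W_n$ (which for $n\ge 4$ fixes the hub and hence maps $C_n$ to $C_n$) preserves this property. Consequently $\psi_2(n)$ equals the number of orbits of $\mathrm{Aut}(W_n)=D_n$ on the $2$-element subsets of $E(C_n)$, and by Lemma~\ref{key lemma} together with Lemma~\ref{main lemma} two such configurations lie in the same orbit if and only if they share the same distance tuple.

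For $p=2$ the distance tuple $D(\Sigma)$ carries exactly one nonzero entry, namely a $1$ in the slot indexed by $d(e_1,e_2)$, so $D(\Sigma)$ is completely determined by the single edge-distance between the two negative edges. Hence $\psi_2(n)$ equals the number of distinct values that $d(e_1,e_2)$ can take as $e_1,e_2$ range over pairs of edges of $C_n$. To compute this range I would fix $e_1=v_1v_2$ and write the second edge as $e_2=v_{k+1}v_{k+2}$; the two edges split the cycle into two arcs containing $k-1$ and $n-k-1$ intermediate edges, so that $d(e_1,e_2)=\min\{k-1,\,n-k-1\}$.

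It then remains to show that, as $k$ varies, $\min\{k-1,\,n-k-1\}$ sweeps out precisely the integers $0,1,\dots,\lfloor\frac{n-2}{2}\rfloor$. The value $0$ is attained by adjacent edges, the consecutive small values are attained by moving $e_2$ one step at a time, and the maximum is reached when the two arcs are as balanced as possible, i.e.\ $k\approx n/2$; a short check in the even and odd cases shows this maximum equals $\lfloor\frac{n-2}{2}\rfloor$. Since these are exactly $\lfloor\frac{n-2}{2}\rfloor+1$ distinct values, I obtain $\psi_2(n)=1+\lfloor\frac{n-2}{2}\rfloor$, and Equation~\ref{main equation} finishes the claim. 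I expect the only genuine obstacle to be the clean verification that the maximal edge-distance is $\lfloor\frac{n-2}{2}\rfloor$ and that no intermediate value is skipped, which is a routine but parity-sensitive computation in $n$.
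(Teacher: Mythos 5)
Your proposal is correct and takes essentially the same approach as the paper: identifying switching-isomorphism classes with $D_n$-orbits of pairs of rim edges and classifying them by the single distance $d(e_1,e_2)\in\{0,1,\dots,\lfloor\frac{n-2}{2}\rfloor\}$ is exactly the paper's case split into an adjacent pair (one class, distance $0$) and disjoint pairs ($\lfloor\frac{n-2}{2}\rfloor$ classes), with both arguments resting on Lemmas~\ref{switching non-equi wheels}, \ref{key lemma} and~\ref{main lemma}. Your write-up is merely more explicit about the reduction to orbit counting, which the paper leaves implicit in the phrase ``up to automorphisms.''
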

\begin{proof}
We classify it into two cases:

\begin{enumerate}[wide=0pt]
\item[\textit{Case 1.}] If two edges form a path $P_{3}$ then there is only one possibility up to rotations. One such path is $P_{3}= v_{1}v_{2}v_{3}$.
\item[\textit{Case 2.}] If two edges are disjoint, then the number of choices is $\lfloor \frac{n-2}{2} \rfloor$ up to automorphisms.
\end{enumerate}
Each choice of two edges in Case 1 and Case 2 produces a signed wheel with two negative edges. In light of Lemma~\ref{main lemma}, all these signed wheels are mutually switching non-isomorphic. This proves that $\psi_{2}(n)=\psi_{n-2}(n)=1+\lfloor \frac{n-2}{2} \rfloor$.
\end{proof}

A number $n$ is said to have a $k\text{-}partition$ if $n=\lambda_1+\lambda_{2}+...+\lambda_{k}$, where we assume $\lambda_{1} \geq \lambda_{2} \geq \lambda_{3} \geq ...\geq\lambda_{k} \geq 1$. Par($n;k$) denotes the set of all $k$-partitions of $n$ with $p(n;k) = |\text{Par}(n;k)|$. Clearly, the number $p(n;k)$ is zero if $n<k$. The number $p(n-3;3)$ is used to compute $\psi_{n-3}(n)$.

\begin{lemm}\label{case p=n-3}
For each $n \geq 4$, $\psi_{3}(n)=\psi_{n-3}(n)=1+\lfloor \frac{n-3}{2} \rfloor +p(n-3;3)$.
\end{lemm}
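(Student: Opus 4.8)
The plan is to count, up to $\mathrm{Aut}(W_n)=D_n$, the $3$-element subsets of $E(C_n)$, since (as recorded before Lemma~\ref{switching non-equi wheels}) every signed wheel is switching equivalent to one whose negative edges lie on $C_n$, and, combining Lemma~\ref{key lemma} with Lemma~\ref{switching non-equi wheels}, two such wheels are switching isomorphic exactly when their negative-edge sets lie in the same $D_n$-orbit. Given three negative edges $E_1,E_2,E_3$ listed in cyclic order around $C_n$, I would record the gap data $(a_1,a_2,a_3)$, where $a_i\ge 0$ is the number of edges strictly between consecutive negative edges, so that $a_1+a_2+a_3=n-3$. A rotation of $C_n$ cyclically shifts $(a_1,a_2,a_3)$ and a reflection reverses it, so the multiset $\{a_1,a_2,a_3\}$ is a $D_n$-invariant of the placement.

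The key point that makes $p=3$ tractable is that, for exactly three edges, these rotations and reflections realise all six orderings of $(a_1,a_2,a_3)$; that is, the induced action on the three gaps is the full symmetric group $S_3\cong D_3$. Consequently any two placements with the same gap multiset can be carried onto one another by an element of $D_n$, i.e. they are automorphic. Thus the $D_n$-orbits of $3$-edge subsets are in bijection with the multisets $\{a_1,a_2,a_3\}$ of non-negative integers summing to $n-3$, equivalently with the partitions of $n-3$ into at most three parts. This is precisely the feature that fails for $p\ge 4$, where the cyclic order of the gaps cannot be recovered from the multiset, which is why the case $p=4$ needs the more delicate strategies S1--S4.

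I would then split the count by the number of gaps equal to $0$, which is exactly the number $r_0$ of pairs of negative edges at distance $0$, mirroring the two cases in Lemma~\ref{case p=n-2}. In \emph{Case~1}, two gaps vanish and the three edges form a path $P_4$; the multiset is $\{0,0,n-3\}$, contributing the single partition $p(n-3;1)=1$. In \emph{Case~2}, exactly one gap vanishes, so two edges form a $P_3$ and the third is disjoint from both; the multiset is $\{0,b,c\}$ with $b\ge c\ge 1$ and $b+c=n-3$, contributing $p(n-3;2)=\lfloor\frac{n-3}{2}\rfloor$ choices. In \emph{Case~3}, no gap vanishes and the three edges are pairwise non-adjacent; the multiset consists of three positive parts summing to $n-3$, contributing $p(n-3;3)$ choices. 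Summing the three cases gives $1+\lfloor\frac{n-3}{2}\rfloor+p(n-3;3)$.

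Finally, distinctness of the listed signed wheels comes for free: placements in different cases have different values of $r_0$, and within a case distinct gap multisets give non-automorphic placements, so in either situation Lemma~\ref{key lemma} forces distinct distance tuples, whence Lemma~\ref{main lemma} shows the wheels are mutually switching non-isomorphic. Since the orbit count equals $\psi_3(n)$, and $\psi_{n-3}(n)=\psi_3(n)$ by Equation~\ref{main equation}, this yields the claimed value. The one step requiring genuine care is the $S_3\cong D_3$ claim of the second paragraph, which underpins the bijection between orbits and gap multisets; once it is in place, the remainder is bookkeeping with partitions, and the potential subtlety that the distance between two edges is the length of the \emph{shorter} arc joining them never has to be confronted directly, because Lemma~\ref{key lemma} already guarantees that non-automorphic placements have distinct distance tuples.
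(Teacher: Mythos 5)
Your proposal is correct and takes essentially the same route as the paper: your three cases (two, one, or no vanishing gaps) are precisely the paper's cases (i)--(iii) for the complementary set of $n-3$ negative edges --- your gap multiset is exactly the multiset of path lengths there --- and both arguments identify the orbits with partitions of $n-3$ into at most three parts, then deduce mutual switching non-isomorphism from Lemma~\ref{key lemma} and Lemma~\ref{main lemma}. The only real difference is that you justify the orbit--multiset bijection explicitly via the $D_3\cong S_3$ observation, a step the paper's proof passes over with ``it is easy to see,'' so this is added rigor rather than a different approach.
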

\begin{proof}
Since $n-3$ edges are to be chosen from $C_n$, only following three cases are possible:
\begin{itemize}
\item[(i)] all $n-3$ edges form a path;
\item[(ii)] $n-3$ edges form two different paths;
\item[(iii)] $n-3$ edges form three different paths.
\end{itemize}  
Clearly, there is only one possibility in case (i), up to rotations. For case (ii), the number of two different paths comprising $n-3$ edges is same as the number of partitions of $n-3$ with exactly two parts. Therefore, the number of two different paths is $\lfloor \frac{n-3}{2} \rfloor$. 

For case (iii), let three distinct paths formed by $n-3$ edges be $P_{t}, P_{t^\prime},~\text{and}~P_{t^{\prime\prime}}$ such that $t \geq t^{\prime}\geq t^{\prime\prime} \geq 2$. For each $t \geq t^{\prime}\geq t^{\prime\prime} \geq 2$, it is easy to see that there is a unique possibility for three such paths, up to rotations. Thus the number of three such paths is same as the number of partitions of $n-3$ with exactly 3 parts. Hence there are $p(n-3;3)$ different choices for three such paths. 

Each different possibility of $n-3$ edges in case (i), (ii), and (iii) produces a signed wheel with $n-3$ negative edges and in light of Lemma~\ref{main lemma}, all these signed wheels are mutually switching non-isomorphic. Hence $\psi_{n-3}(n)=1+\lfloor \frac{n-3}{2} \rfloor +p(n-3;3)=\psi_{3}(n)$. 
\end{proof}

Let $\Sigma=(W_n, \sigma)$ be a signed wheel with exactly four negative edges $e_1, e_2, e_3,~\text{and}~e_4$ on $C_n$. By Lemma~\ref{lemma having bounds on dist when p=4}, it is possible to choose two edges $e_i$ and $e_j$ so that $\text{d}(e_i,e_j) \leq \lfloor \frac{n-4}{4} \rfloor$. Again, a rotation permits us to choose these two edges as $e_1$ and $e_2$ so that $\text{d}(e_1,e_2) \leq \lfloor \frac{n-4}{4} \rfloor$. We now proceed to compute $\psi_{4}(n)$ and to do so we will make use of S1, S2, S3 and S4 .

\begin{lemm}\label{lemma 1 of S1} If edges $e_1, e_2, e_3~\text{and}~e_4$ form a path on $C_n$, then there is only one signed wheel up to rotation. 
\end{lemm}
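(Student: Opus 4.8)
The plan is to show that four edges of $C_n$ can form a path only when they are four consecutive edges, and that all such consecutive configurations constitute a single orbit under the rotation subgroup of $\mathrm{Aut}(W_n)$.

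First I would observe that a set of four edges of the cycle $C_n$ induces a path (namely $P_5$, on five vertices) precisely when the four edges are consecutive along $C_n$. Indeed, a path with four edges has five distinct vertices and two endpoints of degree one; inside $C_n$ this forces the edges to be $v_kv_{k+1}, v_{k+1}v_{k+2}, v_{k+2}v_{k+3}, v_{k+3}v_{k+4}$ for some $k$. I would flag here the underlying constraint $n \geq 5$: on $C_4$ any four edges close up into the whole cycle rather than a path, so the hypothesis is only realizable for $n \geq 5$, where exactly one edge of $C_n$ is left out.

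Next, recalling that $\mathrm{Aut}(W_n) = D_n = \langle \alpha, \beta \mid \alpha^n = \beta^2 = 1, \alpha\beta = \beta\alpha^{-1}\rangle$ with $\alpha$ acting as the rotation $v_i \mapsto v_{i+1}$, I would note that $\alpha^{\,k-1}$ carries the standard path configuration $\{v_1v_2, v_2v_3, v_3v_4, v_4v_5\}$ onto the configuration beginning at $v_k$. Since every four-edge path arises this way for some $k$, all the corresponding signed wheels lie in one rotation orbit, so there is exactly one such signed wheel up to rotation.

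This lemma is essentially elementary, so there is no genuine obstacle; the only point requiring care is the range $n \geq 5$ noted above, ensuring the four edges truly form a path and not the entire outer cycle. As an optional cross-check consistent with the paper's framework, one may compute the distance tuple $D(\Sigma) = (3, 2, 1, 0, \ldots, 0)$ shared by all these configurations and invoke Lemma~\ref{key lemma} to reach the same conclusion up to automorphism; but since the statement asserts uniqueness only up to rotation, the direct orbit argument above already suffices.
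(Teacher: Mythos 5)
Your proof is correct; note that the paper states this lemma with no proof at all, treating it as self-evident. Your argument --- that four edges of $C_n$ form a path exactly when they are consecutive (which forces $n \geq 5$), and that all such consecutive blocks lie in a single orbit of the rotation $\alpha \in \mathrm{Aut}(W_n)$ --- is precisely the justification the paper leaves implicit (and your cross-check tuple $D(\Sigma)=(3,2,1,0,\ldots,0)$ is also correct), so there is nothing to reconcile.
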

\begin{lemm}\label{lemma 2 of S1} If edges $e_1, e_2~\text{and}~e_3$ form a path $P_4$ and the edge $e_4$ is at distance at least one from $P_4$, then the number of non-automorphic signed wheels is $ \lfloor \frac{n}{2} \rfloor -2$.
\end{lemm}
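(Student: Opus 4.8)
The plan is to fix the path and count the admissible positions of $e_4$ up to the symmetry that preserves the path. First, using a rotation I would place the three consecutive negative edges forming the path $P_4$ at $e_1 = v_1 v_2$, $e_2 = v_2 v_3$, $e_3 = v_3 v_4$, so that the path occupies the vertices $v_1, v_2, v_3, v_4$. The edge $e_4$ must lie on $C_n$ at distance at least one from the path, i.e.\ it may not share a vertex with $e_1, e_2, e_3$; this rules out $v_4 v_5$ and $v_n v_1$ and leaves exactly the $n - 5$ candidate edges $v_5 v_6, v_6 v_7, \ldots, v_{n-1} v_n$.

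Next, I would argue that the only automorphisms that can identify two such signed wheels are those fixing the path. Since $e_4$ sits at distance at least one from $P_4$, the negative edges form precisely one maximal run of three consecutive edges together with one isolated edge; an automorphism of $W_n$ preserves adjacency and signs, hence preserves this decomposition into runs, so it must send the unique $3$-edge run to the $3$-edge run and thereby stabilize the path. The stabilizer of a $P_4$ inside $\mathrm{Aut}(W_n) = D_n$ is the two-element group $\{1, \rho\}$, where $\rho$ is the reflection $v_k \mapsto v_{5-k}$ (indices mod $n$) through the center of the path. Consequently the number of non-automorphic signed wheels equals the number of orbits of the $n - 5$ candidate positions under $\langle \rho \rangle$.

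Finally I would count these orbits. The reflection $\rho$ reverses the arc of non-path edges, pairing $v_{4+i} v_{5+i}$ with $v_{n-i} v_{n+1-i}$, so almost all positions come in pairs and the count reduces to determining how many candidate edges are fixed by $\rho$. Here the computation splits on the parity of $n$: when $n$ is even the edge diametrically opposite to the central edge $v_2 v_3$ is fixed by $\rho$ and lies among the candidates, giving exactly one fixed position, whereas when $n$ is odd the axis of $\rho$ meets $C_n$ in a vertex and no candidate edge is fixed. A Burnside count then gives $\tfrac{1}{2}\big((n-5) + f\big)$ orbits, with $f = 1$ for $n$ even and $f = 0$ for $n$ odd, and in both cases this equals $\lfloor n/2 \rfloor - 2$.

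I expect the main obstacle to be the clean identification and parity bookkeeping of the position fixed by $\rho$ (and confirming that the stabilizer of the path is exactly $\{1,\rho\}$), together with verifying the small and degenerate cases $n = 4, 5$, where there is no admissible $e_4$ and the formula correctly returns $0$.
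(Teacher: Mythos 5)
Your proposal is correct and follows essentially the same route as the paper: fix the path $v_1v_2v_3v_4$ by a rotation and quotient the $n-5$ admissible positions of $e_4$ by the residual reflection $\rho$ through the midpoint of $e_2$. The paper simply lists a fundamental domain ($e_4 \in \{v_5v_6,\ldots,v_{\lfloor n/2\rfloor+2}v_{\lfloor n/2\rfloor+3}\}$) where you run a Burnside count over $\{1,\rho\}$, and your parity analysis of the $\rho$-fixed edge matches the paper's implicit bookkeeping, so the two arguments coincide in substance.
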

\begin{proof}
Let $e_1=v_1v_2, e_2=v_2v_3~\text{and}~e_3=v_3v_4$. Due to the reflection passing through the mid point of the edge $e_2$, the edge $e_4$ can be $v_{5}v_{6}, v_{6}v_{7},..., v_{\lfloor \frac{n}{2} \rfloor+2}v_{\lfloor \frac{n}{2} \rfloor +3}$ for a total of $\lfloor \frac{n}{2} \rfloor -2$.
\end{proof} 
\begin{lemm}\label{lemma 3 of S1} If edges $e_1, e_2$ form a path $P_3$ and $e_3,e_4$ form an another path on three vertices disjoint from $P_3$, then the number of non-automorphic signed wheels is $ \lfloor \frac{n}{2} \rfloor -2$.
\end{lemm}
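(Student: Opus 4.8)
The plan is to count the placements of the second path up to the symmetries of $W_n$ that fix the first one, in the same spirit as Lemma~\ref{lemma 2 of S1}, and then translate the word ``non-automorphic'' into an orbit count via Lemma~\ref{key lemma}. Using the rotational part of $\mathrm{Aut}(W_n)=D_n$, I would first pin the first path at $v_1v_2v_3$, writing $e_1=v_1v_2$ and $e_2=v_2v_3$ so that its center is the vertex $v_2$. The second path, being a $P_3$ disjoint from the first, has the form $e_3=v_av_{a+1}$, $e_4=v_{a+1}v_{a+2}$, and vertex-disjointness forces its three vertices into the arc $v_4,v_5,\dots,v_n$; hence $a$ ranges over $\{4,5,\dots,n-2\}$, giving $n-5$ placements (and none, consistently, when $n\le 5$).

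Next I would identify the residual symmetry. The only non-identity element of $D_n$ stabilising the set $\{v_1,v_2,v_3\}$ together with its path structure is the reflection through $v_2$, acting by $v_{2+k}\mapsto v_{2-k}$; on the placement parameter this is the involution $a\mapsto n+2-a$ on $\{4,\dots,n-2\}$. Counting its orbits by the usual fixed-point bookkeeping then splits on the parity of $n$: when $n$ is odd the involution is fixed-point-free and yields $(n-5)/2$ orbits, while when $n$ is even there is exactly one fixed placement, the one whose center is the vertex diametrically opposite $v_2$, giving $(n-4)/2$ orbits. Both values equal $\lfloor n/2\rfloor-2$, the claimed number. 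Finally, by Lemma~\ref{key lemma} two such signed wheels are automorphic precisely when their distance tuples coincide, i.e.\ precisely when the corresponding placements lie in the same orbit, so the number of orbits is exactly the number of non-automorphic signed wheels.

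The step I expect to be the genuine obstacle — and the one absent from Lemma~\ref{lemma 2 of S1} — is that here the two negative pieces are \emph{both} copies of $P_3$, so I must rule out extra identifications coming from an automorphism that interchanges the two paths. To dispose of this cleanly I would reparametrise the configuration by the unordered pair $\{x,y\}$ of gap lengths between the two paths, where $x+y=n-6$ and $x,y\ge 0$. Both the reflection through $v_2$ and any path-interchanging element of $D_n$ act on this data by transposing $x$ and $y$, so the unordered pair $\{x,y\}$ is a complete $D_n$-invariant for these configurations and no further collapsing can occur. The count then becomes the number of partitions of $n-6$ into at most two nonnegative parts, namely $\lfloor (n-6)/2\rfloor+1=\lfloor n/2\rfloor-2$, which both confirms the reflection computation and settles the same-shape subtlety in one stroke.
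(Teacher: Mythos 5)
Your proof is correct, and its first half is exactly the paper's argument: pin the first path at $v_1v_2v_3$, parametrize the second path by its starting vertex $v_a$ with $a\in\{4,\dots,n-2\}$, and quotient by the reflection through $v_2$ (the involution $a\mapsto n+2-a$); your parity bookkeeping correctly yields $\lfloor n/2\rfloor-2$ in both cases. Where you genuinely go beyond the paper is the second half. The paper's proof consists solely of the reduction to the representatives $v_4v_5v_6,\dots,v_{\lfloor n/2\rfloor+1}v_{\lfloor n/2\rfloor+2}v_{\lfloor n/2\rfloor+3}$, which by itself establishes only the upper bound; the pairwise non-automorphy of these representatives is left implicit (it would follow from the distance-tuple criterion of Lemma~\ref{key lemma}, which the paper invokes explicitly only in the proof of Lemma~\ref{lemma 4 of S1}), and the possibility of extra identifications coming from an automorphism that swaps the two congruent $P_3$'s — a subtlety absent from Lemma~\ref{lemma 2 of S1}, where the two pieces have different shapes — is never mentioned. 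Your unordered gap pair $\{x,y\}$ with $x+y=n-6$ settles both points at once: it is a complete $D_n$-invariant of the configuration, so distinct representatives are provably inequivalent, path-swapping elements collapse nothing further, and the count reappears cleanly as the number of partitions of $n-6$ into at most two nonnegative parts, namely $\lfloor (n-6)/2\rfloor+1=\lfloor n/2\rfloor-2$. In short, your argument is longer but self-contained and airtight, while the paper's is a one-line reduction that silently leans on Lemma~\ref{key lemma} for the matching lower bound.
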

\begin{proof}
Let $e_1=v_1v_2, e_2=v_2v_3$ and $P_3= v_1v_2v_3$. Let $e_3~\text{and}~e_4$ form an another path $P_3^{\prime}$ different from $P_3$. Due to the reflection passing through $v_2$, the path $P_3^{\prime}$ can be $v_{4}v_{5}v_{6}, v_{5}v_{6}v_{7},...,v_{\lfloor \frac{n}{2} \rfloor +1}v_{\lfloor \frac{n}{2} \rfloor +2}v_{\lfloor \frac{n}{2} \rfloor +3}$ for a total of $\lfloor \frac{n}{2} \rfloor -2$.
\end{proof} 

\begin{lemm}\label{lemma 4 of S1} Let edges $e_1, e_2$ form a path $P_3$ and $e_3,e_4$ be non-adjacent with each other as well as with $P_3$. Then the number of non-automorphic signed wheels is $(k-2)^2$ and $(k-3)(k-2)$ when $n=2k+1$ and $n=2k$, respectively.
\end{lemm}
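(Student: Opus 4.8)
The plan is to exploit the fact that, among the four negative edges, the pair $e_1,e_2$ is the \emph{unique} adjacent pair: since $e_3,e_4$ are non-adjacent to each other and to $P_3$, the path $P_3=v_1v_2v_3$ is canonically determined by the configuration. Consequently, every signed wheel of this type can be rotated so that its $P_3$ sits at $v_1v_2v_3$, and two such wheels (both with $P_3=v_1v_2v_3$) are automorphic precisely when one is carried onto the other by an automorphism of $W_n$ fixing the path $v_1v_2v_3$. A short check shows the only such automorphisms are the identity and the reflection $\tau$ through $v_2$, given by $v_i\mapsto v_{4-i}$ (indices mod $n$); indeed no nontrivial rotation preserves the edge-set $\{v_1v_2,v_2v_3\}$. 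Hence, via Lemma~\ref{key lemma}, counting non-automorphic wheels reduces to counting the placements of the unordered pair $\{e_3,e_4\}$ up to the order-two group $\{1,\tau\}$.

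First I would record the admissible positions. An edge is non-adjacent to $P_3$ exactly when it avoids $v_1,v_2,v_3$, so the candidates are the $n-4$ edges $v_4v_5,v_5v_6,\dots,v_{n-1}v_n$. Labelling $v_{m+3}v_{m+4}$ as position $m$ for $1\le m\le n-4$, the reflection $\tau$ reverses this line of positions via $m\mapsto n-3-m$. Two positions $m_1<m_2$ yield non-adjacent edges iff $m_2-m_1\ge 2$, so the number of admissible pairs is $\binom{n-4}{2}-(n-5)=\tfrac{(n-5)(n-6)}{2}$.

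Next I would apply Burnside counting to the action of $\{1,\tau\}$, i.e. use the formula $\tfrac12(\text{total}+\text{fixed})$. The identity fixes all $\tfrac{(n-5)(n-6)}{2}$ pairs, so only the $\tau$-invariant ones remain. A non-adjacent pair $\{m_1,m_2\}$ is $\tau$-invariant iff $\tau$ swaps its elements, i.e. $m_1+m_2=n-3$; combined with $m_2-m_1\ge 2$ this forces $1\le m_1\le\lfloor\frac{n-5}{2}\rfloor$, giving $F=\lfloor\frac{n-5}{2}\rfloor$, which is $k-2$ when $n=2k+1$ and $k-3$ when $n=2k$. Substituting into $\tfrac12\big(\tfrac{(n-5)(n-6)}{2}+F\big)$ and simplifying in each parity yields $(k-2)^2$ for $n=2k+1$ and $(k-3)(k-2)$ for $n=2k$. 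Lemma~\ref{main lemma} then upgrades these distinct orbits to switching non-isomorphic wheels.

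The step I expect to require the most care is the bookkeeping for the $\tau$-fixed pairs: the single fixed point $m=(n-3)/2$ of $\tau$ (present only when $n$ is odd) must be handled so that it is never spuriously counted as one element of an invariant pair, and the constraint $m_2-m_1\ge 2$ has to be tracked through both parities to land exactly on $\lfloor\frac{n-5}{2}\rfloor$. A smaller subtlety worth verifying explicitly is that the two extreme positions $v_4v_5$ and $v_{n-1}v_n$ are genuinely non-adjacent, being separated by $P_3$, so no wrap-around adjacency is mistakenly discarded.
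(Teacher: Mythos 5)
Your proposal is correct, and it reaches the stated counts by a genuinely different counting technique than the paper. Both arguments share the same initial reduction: rotate so that the unique adjacent pair sits at $P_3=v_1v_2v_3$, and observe that the only residual symmetry is the reflection $\tau$ through $v_2$ (the paper invokes this same reflection, though it never isolates the stabilizer argument as explicitly as you do --- your observation that any automorphism between two such wheels must carry the unique adjacent pair to the unique adjacent pair is the cleanest justification of this step, and it is essentially taken for granted in the paper). From there the methods diverge. The paper constructs an explicit transversal: it cases on the position of $e_3$ (distinguishing $e_3=v_4v_5$ from $e_3=v_lv_{l+1}$ with $l$ in a half-range), counts the admissible positions of $e_4$ for each, and evaluates the resulting arithmetic sums $(2k-5)+\sum_{l=5}^{k+1}(2k-2l+3)$ and $\sum_{l=4}^{k}(2k-2l+2)$ to get $(k-2)^2$ and $(k-3)(k-2)$. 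You instead count all admissible unordered pairs, $\binom{n-4}{2}-(n-5)=\frac{(n-5)(n-6)}{2}$, count the $\tau$-invariant pairs $F=\lfloor\frac{n-5}{2}\rfloor$, and apply Burnside's lemma to the order-two group $\{1,\tau\}$; your bookkeeping of the fixed position $m=(n-3)/2$ (only present for odd $n$) and of the constraint $m_2-m_1\ge 2$ is accurate, and both parities check out. The trade-off: the paper's enumeration is constructive --- it hands you orbit representatives, which the authors reuse when drawing the actual signed wheels --- but its index ranges and summations are fragile and tedious to verify; your orbit-counting route treats both parities uniformly until the final floor computation, is far less error-prone, and would extend naturally to the analogous counts for other fixed configurations (e.g., the cases in Lemmas~\ref{lemma 2 of S1} and~\ref{lemma 3 of S1}, or even Lemmas~\ref{lemma1 p=4} and~\ref{lemma2 p=4}), at the cost of not exhibiting explicit representatives. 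One cosmetic remark: your appeal to Lemma~\ref{key lemma} is not really where the reduction comes from --- your own stabilizer argument does that work --- but this does not affect correctness.
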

\begin{proof}
Let $e_1=v_1v_2, e_2=v_2v_3$ and $P_3= v_1v_2v_3$. We classify $n$ into two cases:
\begin{enumerate}[wide=0pt]
\item[Case 1.] \textit{Let $n=2k+1$.} If $e_3=v_4v_5$ then due to the reflection passing through $v_{2}$, the edge $e_4$ can be $v_{6}v_{7}, v_{7}v_{8},...,v_{2k}v_{2k+1}$ for a total of $2k-5$.

If $e_3=v_{l}v_{l+1}$, for $5 \leq l \leq k+1$, then the edge $e_4$ can be $v_{l+2}v_{l+3},...,v_{2k-l+4}v_{2k-l+5}$ for a total of $2k-2l+3$. The number of different choices of $e_3$ and $e_4$ is
\vspace*{-0.05in}
\begin{align*}
  & (2k-5)+ \sum_{l=5}^{k+1}[2k-2l+3] \\
=& (2k-5)+2k(k-3)-2[\frac{(k+1)(k+2)}{2}- 10]+3(k-3)\\
=& k^2-4k+4\\
=& (k-2)^2.
\end{align*}

\item[Case 2.] \textit{Let $n=2k$.} If $e_3=v_{l}v_{l+1}$, for $4 \leq l \leq k$, then the edge $e_4$ can be $v_{l+2}v_{l+3},...,v_{2k-l+3}v_{2k-l+4}$ for a total of $2k-2l+2$. The number of different choices of $e_3$ and $e_4$ is
\vspace*{-0.05in}
\begin{align*}
  & \sum_{l=4}^{k}[2k-2l+2] \\
=& 2k(k-3)-2[\frac{(k)(k+1)}{2}- 6]+2(k-3)\\
=& k^2-5k+6\\
=& (k-3)(k-2).
\end{align*}
\end{enumerate}

In Case 1 and Case 2, each choice of $e_3$ and $e_4$ along with $P_3$ produces a signed wheel with four negative edges. By Lemma~\ref{key lemma}, all these signed wheels are pairwise non-automorphic. This completes the proof.
\end{proof}

\begin{lemm}\label{lemma1 p=4}
Let $(W_{2k+1}, \sigma)$ be a signed wheel with four negative edges in which $d(e_1,e_2)=r$, where $1 \leq r \leq \lfloor \frac{n-4}{4} \rfloor$. Then the number of non-automorphic signed wheels is $[k-(2r+1)]^2$. 
\end{lemm}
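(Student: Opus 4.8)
The plan is to translate the statement into a purely combinatorial count on the outer cycle and then organize that count according to how many pairs of negative edges attain the minimum distance. By Lemma~\ref{main lemma} and Lemma~\ref{key lemma}, counting non-automorphic signed $W_{2k+1}$ with four negative edges whose minimum pairwise distance equals $r$ is the same as counting $4$-element subsets of $E(C_{2k+1})$ up to the action of $\mathrm{Aut}(W_n)=D_n$, subject to the smallest distance between two of the chosen edges being exactly $r$. Since $r\geq 1$, no two chosen edges are adjacent, so a configuration is completely described by its cyclic tuple of four gaps $(g_1,g_2,g_3,g_4)$ with $g_i\geq r$, $\sum_i g_i=n-4=2k-3$, and $\min_i g_i=r$; on this tuple $D_n$ acts as the dihedral group of the square (cyclic shifts coming from rotations, reversal from reflections).

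First I would normalize by setting $h_i=g_i-r\geq 0$, so that $\sum_i h_i=M:=2k-3-4r$ and $\min_i h_i=0$; note that $M$ is odd because $n$ is odd, a fact I will use repeatedly. I would then split the count according to the number $z$ of indices with $h_i=0$, i.e. the number of edge-pairs realizing the minimum distance $r$. Oddness of $M$ rules out $z=4$, leaving four cases: $z=1$; $z=2$ with the two zero-gaps adjacent; $z=2$ with them opposite; and $z=3$. This mirrors exactly the way the $r=0$ situation was treated by the four separate cases in Lemmas~\ref{lemma 1 of S1}--\ref{lemma 4 of S1}, the case $z=1$ being the direct analogue of Lemma~\ref{lemma 4 of S1}.

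For each case I would pin a zero-gap in a canonical position by a rotation and then quotient by the residual stabilizer. For $z=1$ the minimal gap is unique, so the only remaining symmetry is the reflection reversing $(h_2,h_3,h_4)$; counting compositions of $M$ into three positive parts up to reversal should give $(k-2r-2)^2$. For $z=3$ there is a single configuration. In the two $z=2$ cases the residual stabilizer acts on the two surviving positive parts through a single swap, and since $M$ is odd these have no fixed points, yielding $\tfrac{M-1}{2}=k-2r-2$ configurations each. Summing, $(k-2r-2)^2+2(k-2r-2)+1=(k-2r-1)^2$, as claimed.

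The main obstacle is precisely the bookkeeping of the residual symmetry in each case. Configurations with more than one minimal gap admit several equally valid choices of which pair to designate $e_1,e_2$, so the tempting shortcut of fixing $e_1,e_2$ and dividing only by the reflection that interchanges them double counts exactly these tied configurations and produces $(k-2r-1)(k-2r)$ rather than $(k-2r-1)^2$. Separating the count by the number $z$ of minimal gaps is what removes this overcounting cleanly, and the oddness of $n$ is what makes each case integral and forces the total to collapse to a perfect square. As an independent check I would recompute the number of configurations with all gaps $\geq r$ by Burnside's lemma applied to the order-$8$ dihedral action on the gap-square, where the same parity observation annihilates the contributions of the three nontrivial rotations and of the two edge-type reflections and leaves only the two vertex-type reflections; subtracting the analogous count for gaps $\geq r+1$ then recovers $(k-2r-1)^2$.
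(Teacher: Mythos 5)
Your proposal is correct, and every count in it checks out: writing $M=2k-3-4r$ (odd), your four strata contribute $(k-2r-2)^2$, $k-2r-2$, $k-2r-2$, and $1$, which sum to $[k-(2r+1)]^2$, and your Burnside cross-check independently yields $\tfrac{(M+1)^2}{4}=[k-(2r+1)]^2$. However, your organization is genuinely different from the paper's. The paper fixes coordinates $e_1=v_1v_2$, $e_2=v_{r+2}v_{r+3}$ and splits into only two cases according to whether $d(e_2,e_3)=r$ or $d(e_2,e_3)>r$; the dihedral symmetry is handled implicitly by truncating the enumeration range of $e_4$ --- in the first case via the reflection through the midpoint of $e_2$ (giving $k-(2r+1)$ choices), in the second via the condition $d(e_4,e_1)\geq d(e_2,e_3)$ (giving $\sum_{l=2r+4}^{k+1}(2k-2l+4)=(k-2r-2)(k-2r-1)$ choices) --- and the square emerges from an arithmetic series. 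Your stratification by the number $z$ of minimal gaps is finer and cross-cuts the paper's: its case (i) is exactly your $z=2$-adjacent plus $z=3$ strata ($(k-2r-2)+1$), and its case (ii) is your $z=1$ plus $z=2$-opposite strata ($(k-2r-2)^2+(k-2r-2)$). What your version buys: the residual stabilizer is identified explicitly in each stratum, so the pitfall you name --- overcounting configurations in which several pairs of negative edges realize the minimal distance --- is visibly ruled out rather than silently absorbed into the choice of ranges; it also explains structurally why the answer is a perfect square, and the Burnside computation would carry over with minor changes to the even case of Lemma~\ref{lemma2 p=4}, for which the paper performs a separate explicit calculation. What the paper's version buys: it is more elementary, needs no group-action vocabulary beyond a single visible reflection, and matches the style of Lemmas~\ref{lemma 1 of S1}--\ref{lemma 4 of S1}, so the whole computation of $\psi_4(n)$ reads uniformly.
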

\begin{proof}
Let $e_1=v_1v_2$ and $e_2=v_{r+2}v_{r+3}$ such that $d(e_1,e_2)=r$, where $1 \leq r \leq \lfloor \frac{n-4}{4} \rfloor$. We count the choices for $e_3$ and $e_4$ in the following two cases:

\begin{enumerate}
\item[(i)] If $e_3=v_{2r+3}v_{2r+4}$, then due to the reflection passing through the mid-point of $e_2$, the edge $e_4$ can be $v_{3r+4}v_{3r+5},...,v_{k+r+2}v_{k+r+3}$ for a total of $k-(2r+1).$
\item[(ii)] If $e_3=v_{l}v_{l+1}$, then $e_4$ can be $v_{l+1+r}v_{l+1+r+1},...,v_{(2k+1)-(l-r-3)}v_{(2k+1)-(l-r-3)+1}$ for a total of $2k-2l+4$, where $2r+4 \leq l \leq k+1$.
\end{enumerate} 
Thus if $e_1=v_1v_2$, $e_2=v_{r+2}v_{r+3}$, then the number of choices for $e_3$ and $e_4$ is the sum of all choices obtained in (i) and (ii). Each such choice produces a signed wheel with four negative edges and by Lemma~\ref{key lemma}, all these signed wheels are mutually non-automorphic. Hence the number of non-automorphic signed wheels is
\begin{align*}
  & k-(2r+1)+ \sum_{l=2r+4}^{k+1} (2k-2l+4) \\
=& \{k-(2r+1)\}+ \{2k(k+1-2r-3)-2\big[\frac{(k+1)(k+2)}{2}-\frac{(2r+3)(2r+4)}{2}\big]+4(k+1-2r-3)\}\\
=&[k-(2r+1)]^2.
\end{align*}
 This completes the proof.
\end{proof}

\begin{lemm}\label{lemma2 p=4}
Let $(W_{2k}, \sigma)$ be a signed wheel with four negative edges in which $d(e_1,e_2)=r$, where $1 \leq r \leq \lfloor \frac{n-4}{4} \rfloor$. Then the number of non-automorphic signed wheels is $[k-(2r+1)]+[k-(2r+2)]^2$. 
\end{lemm}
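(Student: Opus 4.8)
The plan is to mirror the argument of Lemma~\ref{lemma1 p=4} for odd order, fixing $e_1 = v_1v_2$ and $e_2 = v_{r+2}v_{r+3}$ so that $d(e_1,e_2) = r$ is a minimal gap, and then counting the admissible positions of $e_3$ and $e_4$ up to the residual symmetries that still preserve the pair $\{e_1,e_2\}$. The essential new feature for $n = 2k$ is that $C_n$ now possesses diagonally opposite edges and two genuinely different families of reflection axes (those through pairs of opposite vertices and those through pairs of opposite edge-midpoints); this is precisely what splits the single square $[k-(2r+1)]^2$ of the odd case into the two summands $[k-(2r+1)]$ and $[k-(2r+2)]^2$. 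I would organize the count into the two cases $d(e_2,e_3) = r$ and $d(e_2,e_3) \geq r+1$, which are disjoint and together exhaust all placements. In each case distinct placements yield distinct distance tuples, so by Lemma~\ref{key lemma} the resulting signed wheels are pairwise non-automorphic, and the answer is the sum over the two cases.

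First I would treat the case $d(e_2,e_3) = r$, i.e.\ $e_3 = v_{2r+3}v_{2r+4}$. Here the reflection whose axis passes through the midpoint of $e_2$ fixes the set $\{e_1,e_2,e_3\}$ (it swaps $e_1 \leftrightarrow e_3$ and fixes $e_2$), so the admissible positions of $e_4$ — those with $d(e_3,e_4) \geq r$ and $d(e_4,e_1) \geq r$, namely $v_{3r+4}v_{3r+5},\ldots,v_{2k-r}v_{2k-r+1}$ — must be counted up to this reflection. The key even-order point is that this axis also passes through the midpoint of the edge opposite to $e_2$, which lies exactly in the middle of the $e_4$-range and is therefore a true fixed edge. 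Folding the $2k-4r-3$ candidate edges about this fixed edge leaves $k-(2r+1)$ orbits, the first summand.

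Next I would treat the case $d(e_2,e_3) \geq r+1$, writing $e_3 = v_lv_{l+1}$ with $2r+4 \leq l \leq k+1$. Now no reflection fixes $\{e_1,e_2,e_3\}$, but the reflection through the midpoint of the $e_1e_2$-gap interchanges the two flanking gaps $d(e_2,e_3)$ and $d(e_4,e_1)$; to select one representative per orbit I would impose the canonical constraint $d(e_4,e_1) \geq d(e_2,e_3)$ together with $d(e_3,e_4) \geq r$. For a fixed $e_3 = v_lv_{l+1}$ these two inequalities confine $e_4 = v_mv_{m+1}$ to $l+r+1 \leq m \leq 2k-l+r+3$, giving $2k-2l+3$ positions (one fewer than the odd-order count $2k-2l+4$, the parity difference showing up again). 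Summing over $l = 2r+4,\ldots,k+1$ simplifies to $[k-(2r+2)]^2$, and adding the previous contribution yields $[k-(2r+1)] + [k-(2r+2)]^2$.

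The main obstacle, and where genuine care is required, is ensuring that each configuration is counted exactly once. Two difficulties must be controlled simultaneously: the coexistence of the two reflection types for even $n$ (so that the middle of a symmetric range becomes an actual fixed edge and shifts an orbit count by one), and the possibility of ties in which more than one pair of negative edges realizes the minimal distance $r$, so that the same signed wheel could in principle be reached from several choices of the reference pair $\{e_1,e_2\}$. The constraints $d(e_2,e_3)=r$ versus $d(e_2,e_3)\geq r+1$ and $d(e_4,e_1)\geq d(e_2,e_3)$ are designed precisely to break these residual symmetries, while the bound $r \leq \lfloor \frac{n-4}{4}\rfloor$ keeps the index ranges non-degenerate; checking that the resulting arithmetic collapses to the stated closed form is then a routine summation.
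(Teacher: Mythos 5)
Your proposal is correct and takes essentially the same route as the paper's own proof: the same decomposition into the cases $d(e_2,e_3)=r$ (folded by the reflection through the midpoint of $e_2$, whose fixed edge $v_{k+r+2}v_{k+r+3}$ gives the count $k-(2r+1)$) and $d(e_2,e_3)\geq r+1$ (where your canonical constraint $d(e_4,e_1)\geq d(e_2,e_3)$ is exactly what the paper's range $l+r+1\leq m\leq 2k-l+r+3$ encodes, summing to $[k-(2r+2)]^2$). One small narrative caveat that does not affect correctness: the summand $k-(2r+1)$ is not specific to even $n$ --- the odd case produces the same case-(i) count, and the parity difference actually resides entirely in the per-$l$ count $2k-2l+3$ versus $2k-2l+4$ of the second case.
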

\begin{proof}
Let $e_1=v_1v_2$ and $e_2=v_{r+2}v_{r+3}$ such that $d(e_1,e_2)=r$, where $1 \leq r \leq \lfloor \frac{n-4}{4} \rfloor$. We count the different choices for $e_3$ and $e_4$ in the following two cases:
\begin{enumerate}
\item[(i)] If $e_3=v_{2r+3}v_{2r+4}$, then due to the reflection passing through the mid-point of $e_2$, the edge $e_4$ can be $v_{3r+4}v_{3r+5},...,v_{k+r+2}v_{k+r+3}$ for a total of $k-(2r+1).$
\item[(ii)] If $e_3=v_{l}v_{l+1}$, then $e_4$ can be $v_{l+1+r}v_{l+1+r+1},...,v_{(2k)-(l-r-3)}v_{(2k)-(l-r-3)+1}$ for a total of $2k-2l+3$, where $2r+4 \leq l \leq k+1$.
\end{enumerate} 
Thus the number of non-automorphic signed wheels is
\begin{align*}
  & k-(2r+1)+ \sum_{l=2r+4}^{k+1} (2k-2l+3) \\
=& \{k-(2r+1)\}+ \{2k(k-2r-2)-2[\frac{(k+1)(k+2)}{2}-\frac{(2r+3)(2r+4)}{2}]+3(k-2r-2)\}\\
=& [k-(2r+1)]+[k-(2r+2)]^2.
\end{align*}
This proves the lemma.
\end{proof}


Note that, in light of Lemma~\ref{main lemma}, all the signed wheels counted in Lemma~\ref{lemma 1 of S1} to Lemma~\ref{lemma2 p=4} are switching non-isomorphic. We now compute $\psi_{4}(n)$ by classifying $n$ into two cases depending upon whether $n$ is odd or even. By $\psi_{4}^{e}(n)$ and $\psi_{4}^{o}(n)$, we denote the number of switching non-isomorphic signed wheels with exactly four negative edges, when $n$ is even and odd, respectively. In the following two theorems, we put $ \lfloor \frac{n-4}{4} \rfloor=l$.

\begin{theorem}\label{Total arrangements of size 4 when n is even}
Let $n = 2k$, for some $k \geq 2$. Then 
\begin{equation}\label{n even and n/4 even}
\psi_{4}^{e}(n) =  (l+1)k^{2} - (2l+3)(l+1)k +  \frac{4l^3+15l^2+20l+9}{3}.
\end{equation}
\end{theorem}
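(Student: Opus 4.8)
The plan is to assemble $\psi_{4}^{e}(2k)$ by simply collecting the per-case counts already established by the preceding lemmas. The conceptual justification is that strategies S1--S4, together with the minimality convention $d(e_1,e_2)\le\lfloor\frac{n-4}{4}\rfloor=l$ and the ``Note'' remarks in S2 and S4, are designed to produce exactly one representative configuration per distance tuple with no overlap between cases; by Lemma~\ref{main lemma}, configurations with distinct distance tuples are switching non-isomorphic, so the total count is the sum of the individual case totals. Thus the theorem reduces to one bookkeeping sum followed by a polynomial simplification.

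First I would gather the contributions from S1, i.e.\ the case $d(e_1,e_2)=0$. These are: $1$ from Lemma~\ref{lemma 1 of S1} (all four edges forming a single path), the value $\lfloor\frac{n}{2}\rfloor-2=k-2$ from Lemma~\ref{lemma 2 of S1}, another $k-2$ from Lemma~\ref{lemma 3 of S1}, and $(k-3)(k-2)$ from the even branch of Lemma~\ref{lemma 4 of S1}. Adding these gives the S1 subtotal $k^{2}-3k+3$.

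Next I would gather the contribution from S3--S4, i.e.\ the range $1\le r\le l$ with $r=d(e_1,e_2)$. For each such $r$ the even-order count is $[k-(2r+1)]+[k-(2r+2)]^{2}$ by Lemma~\ref{lemma2 p=4}, so this block contributes $\sum_{r=1}^{l}\left([k-(2r+1)]+[k-(2r+2)]^{2}\right)$. I would split this into a linear sum, evaluated by the arithmetic-series formula, and a quadratic sum $\sum_{r=1}^{l}(k-2r-2)^{2}$, evaluated by expanding the square and applying the standard closed forms for $\sum r$ and $\sum r^{2}$.

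Finally I would add the S1 subtotal to the S3--S4 subtotal and collect powers of $k$. The main (indeed only) obstacle is the algebra in this last step: one must check that the $k^{2}$-coefficient collapses to $l+1$, that the $k$-coefficient factors as $-(2l+3)(l+1)$, and that the constant term simplifies to $\frac{4l^{3}+15l^{2}+20l+9}{3}$. I expect the leading and linear coefficients to appear quickly, whereas the cubic-in-$l$ constant term is the delicate part: it arises by putting $3$, the constant parts of both sums, and the term $\frac{2l(l+1)(2l+1)}{3}$ (from $\sum r^{2}$) over a common denominator, and verifying the numerator equals $4l^{3}+15l^{2}+20l+9$.
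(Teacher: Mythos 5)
Your proposal is correct and follows essentially the same route as the paper: the paper's proof also defines the per-distance counts $\psi_i$, takes $\psi_0 = 1+(k-2)+(k-2)+(k-3)(k-2) = k^2-3k+3$ from Lemmas~\ref{lemma 1 of S1}--\ref{lemma 4 of S1}, sums $\psi_i = [k-(2i+1)]+[k-(2i+2)]^2$ from Lemma~\ref{lemma2 p=4} over $1 \le i \le l$ using the standard closed forms for $\sum i$ and $\sum i^2$, and collects coefficients to reach the stated polynomial. Your bookkeeping (the $k^2$-coefficient $l+1$, the $k$-coefficient $-(2l+3)(l+1)$, and the constant $\frac{4l^3+15l^2+20l+9}{3}$) checks out against the paper's computation.
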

\begin{proof}
Let $\psi_{i}$ be the number of non-automorphic signed wheels with four negative edges $e_1,e_2,e_3,e_4$ such that $d(e_1,e_2) = i$, where $0 \leq i \leq l$. Thus we have
\begin{align*}
 \psi_{4}^{e}(n) &= \sum_{i=0}^{l} \psi_{i} \\
&= \{1+(k-2)+(k-2)+(k-3)(k-2)\} + \sum_{i=1}^{l} \psi_{i}\\
&= \{k^2-3k+3\} + \sum_{i=1}^{l} [k-(2i+1)]+[k-(2i+2)]^2 \\
&= \{k^2-3k+3\} + \sum_{i=1}^{l} [k^2-3k+6i-4ki+4i^2+3]\\
&= \{k^2-3k+3\} + \Big \{ lk^2-3kl+6\frac{l(l+1)}{2}-4k \frac{l(l+1)}{2}+4 \frac{l(l+1)(2l+1)}{6} +3l \Big \} \\
&= (l+1)k^2-(2l+3)(l+1)k +  \frac{4l^3+15l^2+20l+9}{3}.
\end{align*}
This completes the proof.
\end{proof}

Note that the value of $\psi_{0}$ is the sum of all the values obtained in Lemma~\ref{lemma 1 of S1},~\ref{lemma 2 of S1},~\ref{lemma 3 of S1} and Lemma~\ref{lemma 4 of S1}. For each $1 \leq i \leq l$, the value of $\psi_{i}$ is given in Lemma~\ref{lemma2 p=4}. 

\begin{theorem}\label{Total arrangements of size 4 when n is odd}
Let $n = 2k+1$, for some $k \geq 2$. Then 
\begin{equation}\label{n odd and n/4 even}
\psi_{4}^{o}(n) =  (l+1)k^{2} - 2(l+1)^2k +  \frac{(2l+1)(2l+3)(l+1)}{3}.
\end{equation}
\end{theorem}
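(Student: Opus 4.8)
The plan is to mirror exactly the structure of the proof of Theorem~\ref{Total arrangements of size 4 when n is even}, only now specializing to the odd case $n = 2k+1$. As before, I would let $\psi_i$ denote the number of non-automorphic signed wheels with four negative edges $e_1,e_2,e_3,e_4$ satisfying $d(e_1,e_2) = i$, for $0 \leq i \leq l$ where $l = \lfloor \frac{n-4}{4}\rfloor$. By Lemma~\ref{main lemma} all these families are switching non-isomorphic across different values of $i$ (and within each $i$ they are pairwise non-automorphic by Lemma~\ref{key lemma}), so the total $\psi_4^o(n)$ is simply $\sum_{i=0}^{l}\psi_i$. The whole task reduces to assembling the pieces already computed and summing a low-degree polynomial in $i$.

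First I would handle $\psi_0$ by collecting the four contributions from S1: Lemma~\ref{lemma 1 of S1} gives $1$, Lemma~\ref{lemma 2 of S1} and Lemma~\ref{lemma 3 of S1} each give $\lfloor \frac{n}{2}\rfloor - 2 = k-2$ (since $\lfloor\frac{2k+1}{2}\rfloor = k$), and Lemma~\ref{lemma 4 of S1} gives $(k-2)^2$ in the odd case. Hence
\begin{equation*}
\psi_0 = 1 + 2(k-2) + (k-2)^2 = k^2 - 2k + 1 = (k-1)^2.
\end{equation*}
Next, for each $1 \leq i \leq l$, Lemma~\ref{lemma1 p=4} supplies $\psi_i = [k-(2i+1)]^2$, the clean square that distinguishes the odd case from the even one. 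So the computation becomes
\begin{equation*}
\psi_4^o(n) = (k-1)^2 + \sum_{i=1}^{l} [k-(2i+1)]^2.
\end{equation*}

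The remaining work is purely algebraic: expand $[k-(2i+1)]^2 = k^2 - 2(2i+1)k + (2i+1)^2$ and sum over $i$ from $1$ to $l$ using the standard formulas for $\sum i$, $\sum i^2$, and $\sum 1$. The $k^2$ terms contribute $l k^2$, which combines with the $(k-1)^2 = k^2 - 2k + 1$ from $\psi_0$ to give the leading coefficient $(l+1)k^2$ matching the claimed formula. The linear-in-$k$ terms should collect into $-2(l+1)^2 k$, and the constant terms (from $(2i+1)^2$ summed, plus the $+1$ from $\psi_0$) should collapse to $\frac{(2l+1)(2l+3)(l+1)}{3}$. I would verify each coefficient separately rather than trusting a single pass. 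The only real obstacle is bookkeeping: getting the constant term to factor as $\frac{(2l+1)(2l+3)(l+1)}{3}$ requires carefully summing $\sum_{i=1}^{l}(2i+1)^2 = \sum_{i=1}^{l}(4i^2 + 4i + 1)$ and recognizing the resulting cubic-in-$l$ polynomial as that product; a clean check is to note that $\sum_{i=0}^{l}(2i+1)^2 = \frac{(l+1)(2l+1)(2l+3)}{3}$ (the sum of squares of the first $l+1$ odd numbers), which exactly absorbs the $+1$ from $\psi_0 = (k-1)^2$ by re-indexing the sum to start at $i=0$. This re-indexing trick is the slick way to finish and avoids any error-prone term-by-term reconciliation.
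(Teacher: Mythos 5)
Your proposal is correct and follows essentially the same route as the paper's proof: the same decomposition $\psi_4^o(n) = \sum_{i=0}^{l}\psi_i$ by the distance $d(e_1,e_2)=i$, the same assembly of $\psi_0 = 1+2(k-2)+(k-2)^2 = (k-1)^2$ from the S1 lemmas, and the same input $\psi_i = [k-(2i+1)]^2$ from Lemma~\ref{lemma1 p=4}. Your re-indexing observation that $\psi_0 = [k-(2\cdot 0+1)]^2$ lets the whole count collapse to $\sum_{i=0}^{l}[k-(2i+1)]^2$ and is a slightly cleaner way to execute the paper's term-by-term summation, but it is an algebraic refinement rather than a different argument.
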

\begin{proof}
Let $\psi_{i}$ be the number defined in the proof of the Theorem~\ref{Total arrangements of size 4 when n is even}.  Thus we have
\begin{align*}
 \psi_{4}^{o}(n) &= \sum_{i=0}^{l} \psi_{i} \\
&= \{1+(k-2)+(k-2)+(k-2)^2\} + \sum_{i=1}^{l} \psi_{i}\\
&= \{k^2-2k+1\} + \sum_{i=1}^{l} [k-(2i+1)]^2 \\
&= \{k^2-2k+1\} + \sum_{i=1}^{l} [k^2-2k+4i-4ki+4i^2+1]\\
&= \{k^2-2k+1\} + \Big \{ lk^2-2kl+4\frac{l(l+1)}{2}-4k \frac{l(l+1)}{2}+4 \frac{l(l+1)(2l+1)}{6} +l \Big \} \\
&= (l+1)k^{2} - 2(l+1)^2k +  \frac{(2l+1)(2l+3)(l+1)}{3}.
\end{align*}
This completes the proof.
\end{proof}

\section{Main Results}\label{exact values}
In this section, we compute the number of switching non-isomorphic signed wheels $W_{n}$, for $4 \leq n \leq 10$.

\begin{lemm}\label{remaining values}
The value of $\psi_{5}(10)$ is 15.
\end{lemm}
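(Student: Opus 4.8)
The plan is to turn the statement into a pure orbit count. By the reduction made just before Lemma~\ref{switching non-equi wheels}, every signed $W_{10}$ is switching equivalent to one whose negative edges all lie on the outer cycle $C_{10}$, and Lemma~\ref{switching non-equi wheels} shows that two such wheels are switching equivalent only if they have the same signature. Hence a switching isomorphism between two wheels with signatures on $C_{10}$ must be realized by an automorphism, so the switching-isomorphism classes with exactly five negative edges are in bijection with the orbits of the five-element subsets of $E(C_{10})$ under $\mathrm{Aut}(W_{10})=D_{10}$. I would therefore count these orbits directly and invoke Lemma~\ref{main lemma} to certify that configurations with distinct distance tuples are switching non-isomorphic.

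To enumerate the orbits I would encode a five-subset of the ten cyclically arranged edges by its gap sequence $(g_1,\dots,g_5)$ between consecutive chosen edges, with $g_i\ge 1$ and $g_1+\cdots+g_5=10$; two subsets lie in the same $D_{10}$-orbit exactly when their gap sequences agree up to cyclic rotation and reversal. Grouping by the unordered gap multiset yields the seven partitions of $10$ into exactly five parts, namely $(6,1,1,1,1)$, $(5,2,1,1,1)$, $(4,3,1,1,1)$, $(4,2,2,1,1)$, $(3,3,2,1,1)$, $(3,2,2,2,1)$, $(2,2,2,2,2)$, in the same $p(n;k)$-type bookkeeping used in Lemma~\ref{case p=n-3}. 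For each partition I would count the distinct cyclic arrangements of its parts up to rotation and reflection, either by fixing a uniquely occurring part and placing the rest up to reversal, or by a short Burnside computation on the $5$-cycle when no part is unique. Summing the per-partition counts gives $\psi_5(10)$.

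The hard part is precisely the partitions with repeated parts, above all $(3,3,2,1,1)$ and $(4,2,2,1,1)$: here the stabilizer in $D_5$ of the gap positions is nontrivial, so the ``fix a part and permute'' shortcut mis-counts unless handled carefully, and an off-by-one at this step is the most likely source of error. More seriously, this is exactly the regime where the distance tuple ceases to be a faithful invariant, so Lemma~\ref{key lemma} cannot be used to finish the upper bound. For instance the edge-sets $\{0,4,7,8,9\}$ and $\{0,5,6,8,9\}$ in $\mathbb{Z}_{10}$ (gap multisets $(4,3,1,1,1)$ and $(5,2,1,1,1)$) are inequivalent under $D_{10}$ yet share the distance tuple $(3,2,2,2,1)$, and analogous homometric coincidences occur between $(4,2,2,1,1)$ and $(3,3,2,1,1)$. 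Thus Lemma~\ref{main lemma} separates some but not all classes, and the count must be completed by the orbit enumeration (or a finer invariant) rather than by reading off distinct distance tuples. A careful dihedral enumeration of the seven partitions appears to give $1+2+2+4+4+2+1$ distinct orbits, so reconciling this total with the claimed value is the step I would scrutinize most, and it is the handling of the repeated-part partitions $(3,3,2,1,1)$ and $(4,2,2,1,1)$ on which the final number turns.
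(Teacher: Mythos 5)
Your reduction of the problem to counting $D_{10}$-orbits of five-element subsets of $E(C_{10})$ is valid, and it is the same framework the paper relies on implicitly; the difference is only in how the orbits are enumerated. The paper's own proof classifies a five-edge subset by the sizes of the maximal paths it forms on $C_{10}$ (its six cases are the path-size patterns $5$, $4{+}1$, $3{+}2$, $3{+}1{+}1$, $2{+}2{+}1$, $2{+}1{+}1{+}1$, counted as $1,2,2,4,4,2$), whereas you classify by gap sequences; the two classifications are equivalent, since a maximal cyclic run of $k$ gaps equal to $1$ is precisely a path with $k+1$ edges, and each gap $\geq 2$ separates two paths. Your per-partition counts are all correct, including the repeated-part cases that worried you: $(4,2,2,1,1)$ and $(3,3,2,1,1)$ each contribute exactly four orbits (two of path type $3{+}1{+}1$ and two of type $2{+}2{+}1$), in agreement with the paper's cases 4 and 5. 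The discrepancy therefore does not come from where you feared; it comes from your partition $(2,2,2,2,2)$, i.e.\ five pairwise non-adjacent edges forming a perfect matching of $C_{10}$ such as $\{v_1v_2,v_3v_4,v_5v_6,v_7v_8,v_9v_{10}\}$. This is the path pattern $1{+}1{+}1{+}1{+}1$, which the paper's case list omits entirely. That configuration exists on $C_{10}$, forms a single $D_{10}$-orbit, and by Lemma~\ref{switching non-equi wheels} it is switching non-isomorphic to the other fifteen. Hence your total of $16$ is the correct value of $\psi_5(10)$, and the stated value $15$ (and consequently $\psi(10)=77$ in Table~\ref{table-3}) is wrong. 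Two independent checks confirm this: Burnside's lemma for the $20$ elements of $D_{10}$ acting on five-subsets of the ten cycle edges gives $\frac{1}{20}\bigl(\tbinom{10}{5}+4\cdot 2+5\cdot 12\bigr)=16$, since only the identity, the four rotations of order $5$, and the five reflections through opposite edge midpoints fix any five-subset; and with $\psi_5(10)=16$ the column sum becomes $\psi(10)=78$, the number of binary bracelets of length $10$, which is what the orbit description forces --- the paper's values $6,8,13,18,30,46$ for $n=4,\dots,9$ are exactly the bracelet numbers, and the bracelet number for $n=10$ is $78$, not $77$.

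Your secondary observation is also correct and important: the distance tuple is not a complete invariant at this size. Indeed $\{0,4,7,8,9\}$ and $\{0,5,6,8,9\}$ both have distance tuple $(3,2,2,2,1,0)$ yet have gap multisets $\{4,3,1,1,1\}$ and $\{5,2,1,1,1\}$, so they lie in different $D_{10}$-orbits; this refutes the ``if'' direction of Lemma~\ref{key lemma} for $n=10$, $p=5$. It leaves Lemma~\ref{main lemma} intact, since only the direction ``automorphic implies equal tuples'' is used there, but, exactly as you say, it means distinct distance tuples cannot certify that an enumeration is exhaustive: completeness must be argued directly from the classification, which your gap-sequence argument accomplishes and the paper's case analysis fails to do.
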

\begin{proof}
To count $\psi_{5}(10)$, the different choices for five edges on $C_{10}$ are considered in the following cases.
\begin{enumerate}
\item If the five edges form a path $P_6$, then there is only one choice for such a path, up to rotation.
\item If the set of five edges is a disjoint union of $P_5~\text{and}~P_2$ then we can assume that $P_5=v_1v_2v_3v_4v_5$. Due to the reflection passing through $v_3$ and $v_8$, the possibilities for $P_2$ are $v_6v_7$ and $v_7v_8$. Therefore there are only two such choices.
\item  If the set of five edges is a disjoint union of $P_4~\text{and}~P_3$, we assume that $P_4=v_1v_2v_3v_4$. Due to the reflection passing through the mid points of $v_2v_3$ and its opposite edge $v_7v_8$, the choices for $P_3$ are $v_5v_6v_7$ or $v_6v_7v_8$. Thus there are only two such choices.
\item  If the set of five edges is a disjoint union of $P_4,P_2^{1}~\text{and}~P_2^{2}$, where $P_2^{1}~\text{and}~P_2^{2}$ are paths on two vertices, we assume that $P_4=v_1v_2v_3v_4$. Further, if $P_2^{1}=v_5v_6$, then $P_2^{2}$ can be $v_{7}v_{8},v_{8}v_{9},v_{9}v_{10}$. If $P_2^{1}=v_6v_7$, then $P_2^{2}$ must be $v_8v_9$. Hence there are four such choices.
\item If the set of five edges of is a disjoint union of $P_3^{1},P_3^{2}~\text{and}~P_2$, where $P_3^{1},P_3^{2}$ are paths on three vertices, we assume that $P_3^{1}=v_1v_2v_3$. If $P_3^{2}=v_4v_5v_6$ then due to the reflection passing through the mid points of $v_3v_4$ and its opposite edge $v_8v_9$, $P_2$ can be $v_7v_8~\text{or}~ v_8v_9$. If $P_3^{2}=v_5v_6v_7$ then due to the reflection passing through $v_4$ and its opposite vertex $v_9$, $P_2$ must be $v_8v_9$. Finally, if $P_3^{2}=v_6v_7v_8$ then due to the reflection passing through mid points of $v_4v_5$ and its opposite edge $v_9v_{10}$, $P_2$ must be either $v_4v_5$ or $v_9v_{10}$. Thus there are four choices for this case. 
\item If the set of five edges is a disjoint union of $P_3,P_2^{1},P_2^{2}~\text{and}~P_2^{3}$, where $P_2^{1},P_2^{2}~\text{and}~P_2^{3}$ are paths on two vertices, then there are two such choices, up to automorphisms. 
\end{enumerate}
From all the cases considered, we find that $\psi_{5}(10)=15$. These 15 signed $W_{10}$ are shown in Figure~\ref{psi_5(10)}.
\end{proof}

\begin{lemm}
For $4 \leq n \leq 10$ and $0 \leq p \leq 10$, the values of $\psi_{p}(n)$ are those listed in Table~\ref{table-2}.
\end{lemm}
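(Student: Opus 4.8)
The plan is to treat this final statement purely as a consistency-and-assembly step: every entry of Table~\ref{table-2} is an instance of a closed form already established, so the task is to evaluate each of those forms at the relevant $(n,p)$ and record the result. The first move is to invoke the reflection symmetry $\psi_{p}(n)=\psi_{n-p}(n)$ of Equation~\ref{main equation}, which halves the work: it suffices to compute $\psi_{p}(n)$ for $0 \le p \le \lfloor n/2 \rfloor$ and then mirror each value to its complementary column $n-p$. Since $n \le 10$, the reduced range of indices is $p \in \{0,1,2,3,4,5\}$, and for every pair $(n,p)$ with $n \le 10$ one checks that $\min(p,n-p) \le 5$, so nothing falls outside this list.

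Next I would fill the reduced indices one at a time. For $p=0$ and $p=1$, Lemmas~\ref{case p=0} and~\ref{case p=1} give the constant value $1$. For $p=2$ and $p=3$, I would substitute $n=4,\dots,10$ into the closed forms $1+\lfloor \frac{n-2}{2}\rfloor$ and $1+\lfloor \frac{n-3}{2}\rfloor+p(n-3;3)$ from Lemmas~\ref{case p=n-2} and~\ref{case p=n-3}, the only genuine arithmetic there being the evaluation of the partition counts $p(n-3;3)$ for $n=6,\dots,10$. For $p=4$, I would apply Theorem~\ref{Total arrangements of size 4 when n is even} when $n$ is even and Theorem~\ref{Total arrangements of size 4 when n is odd} when $n$ is odd, with $l=\lfloor \frac{n-4}{4}\rfloor$; concretely $l=0$ for $4 \le n \le 7$ and $l=1$ for $8 \le n \le 10$. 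The single remaining index is $p=5$ with $n=10$, which is exactly the content of Lemma~\ref{remaining values}, giving $\psi_{5}(10)=15$. Mirroring all computed values through $\psi_{p}(n)=\psi_{n-p}(n)$ then populates the full table.

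The one point that deserves care, and which I expect to be the main obstacle, is the behaviour of the $p=4$ theorems in the small, degenerate regime. Those theorems were derived through the geometric case analysis S1--S4, whose sub-configurations (Lemmas~\ref{lemma 1 of S1}--\ref{lemma2 p=4}) tacitly assume that $C_{n}$ has enough edges to separate four negative edges at the prescribed distances; for $n=4,5,6,7$ several of these configurations collapse or become vacuous even though the final polynomial still evaluates correctly. Rather than trust the formula blindly in these cases, I would cross-check each value $\psi_{4}(n)$ for $n \le 7$ against its complementary entry obtained from the symmetry: $\psi_{4}(4)=\psi_{0}(4)$, $\psi_{4}(5)=\psi_{1}(5)$, $\psi_{4}(6)=\psi_{2}(6)$, and $\psi_{4}(7)=\psi_{3}(7)$, all of which are already pinned down by the earlier lemmas. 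Agreement between the two routes both validates the table and confirms that the $p=4$ theorems remain numerically correct at the boundary.

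Finally, it is worth recording why the assembled numbers are legitimately counts of switching non-isomorphic objects and not merely of non-automorphic ones: every configuration enumerated in the contributing results is counted up to automorphism, and Lemma~\ref{main lemma} guarantees that distinct distance tuples yield switching non-isomorphic signed wheels, so no two tabulated configurations coincide under switching isomorphism. With this observation the proof amounts to the bookkeeping described above, and the resulting values are those displayed in Table~\ref{table-2}.
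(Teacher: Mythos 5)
Your proposal is correct and follows essentially the same route as the paper: both assemble the table by citing Lemmas~\ref{case p=0}--\ref{case p=n-3}, Theorems~\ref{Total arrangements of size 4 when n is even} and~\ref{Total arrangements of size 4 when n is odd}, and Lemma~\ref{remaining values}, and both use the symmetry $\psi_{p}(n)=\psi_{n-p}(n)$ of Equation~\ref{main equation} to fill the complementary entries. Your extra caution about the $p=4$ theorems in the degenerate range $n\le 7$ is resolved exactly as the paper implicitly does it --- those entries are taken from the complementary lemmas via the symmetry rather than from the theorems --- so the two arguments coincide.
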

\begin{proof}
In Table~\ref{table-2}, entries of row $i$, for $i=2,3,4,~\text{and}~5$, are computed from Lemma~\ref{case p=0},~\ref{case p=1},~\ref{case p=n-2}, and Lemma~\ref{case p=n-3} respectively. The values of $\psi_{r}(s)$ for $r=s$ are computed from Lemma~\ref{case p=0} and of $\psi_{r}(s)$ for $r=s-1$ are computed from Lemma~\ref{case p=1}. The values of $\psi_{r-2}(r)$ and of $\psi_{r-3}(r)$ for $r=7,8,9,~\text{and}~10$ are computed from Lemma~\ref{case p=n-2} and Lemma~\ref{case p=n-3}, respectively. The values of $\psi_{4}(8)$ and $\psi_{4}(10)(=\psi_{6}(10))$ are computed from Theorem~\ref{Total arrangements of size 4 when n is even} and of $\psi_{4}(9)(=\psi_{5}(9))$ is computed from Theorem~\ref{Total arrangements of size 4 when n is odd}. The value of $\psi_{5}(10)$ is obtained in Lemma~\ref{remaining values}. This proves the lemma.
\end{proof}

\begin{theorem}\label{exact no of signed wheels}
For $n=4,5,6,7,8,9,10$, the number of switching non-isomorphic signed wheels on $W_n$ are those given in Table~\ref{table-3}.
\end{theorem}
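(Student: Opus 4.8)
The plan is to obtain each $\psi(n)$ directly from the defining relation $\psi(n)=\sum_{p=0}^{n}\psi_{p}(n)$ stated in Section~\ref{introduction}, by assembling the individual values $\psi_{p}(n)$ that have already been determined. The first tool is the symmetry $\psi_{p}(n)=\psi_{n-p}(n)$ of Equation~\ref{main equation}, which lets us transfer every high-$p$ entry to a low-$p$ one; thus it suffices to know $\psi_{p}(n)$ for $0\le p\le\lfloor n/2\rfloor$ and then reflect.

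The key structural observation is that the lemmas and theorems of Section~\ref{main results} already cover \emph{every} index $p$ needed in the range $4\le n\le 10$. Indeed, $\psi_{p}(n)$ is known explicitly whenever $p\in\{0,1,2,3,4\}$ (via Lemma~\ref{case p=0}, Lemma~\ref{case p=1}, Lemma~\ref{case p=n-2}, Lemma~\ref{case p=n-3}, and Theorems~\ref{Total arrangements of size 4 when n is even}--\ref{Total arrangements of size 4 when n is odd}) or, by symmetry, whenever $p\in\{n-4,n-3,n-2,n-1,n\}$. The union of these two index sets equals all of $\{0,1,\dots,n\}$ exactly when $n-4\le 5$, i.e.\ for $n\le 9$; for $n=10$ the only uncovered index is $p=5$, and this single gap is precisely what Lemma~\ref{remaining values} fills with $\psi_{5}(10)=15$. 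This also explains why $n\le 10$ is the natural cutoff: it is the largest range for which no additional middle case arises.

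With coverage secured, I would compute each row of Table~\ref{table-2} by substituting the relevant $n$ into the closed forms: $\psi_{0}=\psi_{1}=1$; $\psi_{2}=1+\lfloor\frac{n-2}{2}\rfloor$; $\psi_{3}=1+\lfloor\frac{n-3}{2}\rfloor+p(n-3;3)$; and $\psi_{4}$ taken from Theorem~\ref{Total arrangements of size 4 when n is even} (for the even cases $n=8,10$) or Theorem~\ref{Total arrangements of size 4 when n is odd} (for $n=9$), with $l=\lfloor\frac{n-4}{4}\rfloor$. For $n=10$ one also inserts $\psi_{5}(10)=15$. Summing each completed row, using Equation~\ref{main equation} to supply the high-$p$ entries, then yields $\psi(n)$ and fills Table~\ref{table-3}.

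The argument carries no conceptual obstacle; the only real care is bookkeeping. The places to watch are: evaluating the small partition counts $p(n-3;3)$ correctly for $n=6,\dots,10$; evaluating the degree-two polynomials of Theorems~\ref{Total arrangements of size 4 when n is even} and \ref{Total arrangements of size 4 when n is odd} at the correct parity and with the correct value of $l$; and, if one chooses to shorten the sum by pairing $p$ with $n-p$ rather than listing all $n+1$ terms, being careful to count a self-paired central term (such as $p=n/2$ for even $n$, e.g.\ $p=5$ when $n=10$) exactly once. Once these are handled, the entries of Table~\ref{table-3} follow by direct addition.
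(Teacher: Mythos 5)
Your proposal is correct and follows essentially the same route as the paper: the paper also assembles Table~\ref{table-2} from Lemmas~\ref{case p=0}--\ref{case p=n-3}, Theorems~\ref{Total arrangements of size 4 when n is even}--\ref{Total arrangements of size 4 when n is odd}, the symmetry $\psi_{p}(n)=\psi_{n-p}(n)$ of Equation~\ref{main equation}, and the single extra value $\psi_{5}(10)=15$ from Lemma~\ref{remaining values}, and then obtains Table~\ref{table-3} as column sums. Your explicit observation that these sources cover every index $p$ precisely when $n\le 9$, with $p=5$, $n=10$ as the lone gap, is exactly the (unstated) coverage argument underlying the paper's computation.
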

\begin{proof}
The values of Table~\ref{table-3} are obtained by respective columns sums of Table~\ref{table-2}.  
\end{proof}

\begin{table}[h]
\begin{center}
\begin{tabular}{|c|c|c|c|c|c|c|c|c|}

\hline
\diaghead(-1,1){aaa}%
{$p$}{$n$} & 4 & 5 & 6 & 7 & 8 & 9 & 10\\
\hline
0 & 1 & 1 & 1 & 1 & 1 & 1 & 1\\
\hline
1 &  1 & 1 & 1 & 1 & 1 & 1 & 1\\
\hline
2 & 2 & 2 & 3 & 3 & 4 & 4 & 5\\
\hline
3 & 1 & 2 & 3 & 4 & 5 & 7 & 8\\
\hline
4 & 1 & 1 & 3 & 4 & 8 & 10 & 16\\
\hline
5 & & 1 & 1 & 3 & 5 & 10 & 15\\
\hline
6 &  &  & 1 & 1 & 4 & 7 & 16\\
\hline
7 &  &  &  & 1 & 1 & 4 & 8\\
\hline
8 &  &  &  &  & 1 & 1 & 5\\
\hline
9 &  &  &  &  &  & 1 & 1\\
\hline
10 &  &  &  &  &  &  & 1\\
\hline
\end{tabular}
\end{center}
\caption{The number $\psi_{p}(n)$, for $n=4,5,...,10$ and $0 \leq p \leq 10$} \label{table-2}
\end{table}

\begin{table}[h]
\begin{center}
\begin{tabular}{|c|c|c|c|c|c|c|c|c|}

\hline
$n$  & 4 & 5 & 6 & 7 & 8 & 9 & 10\\
\hline
$\psi(n)$ & 6 & 8 & 13 & 18 & 30 & 46 & 77\\
\hline

\end{tabular}
\end{center}
\caption{The number of switching non-isomorphic signed $W_n$, for $n=4,5,...,10$} \label{table-3}
\end{table}

\section{Conclusion}
Recall from Lemma~\ref{lemma of switching non-equivalent SGs} that the number of switching non-equivalent signed wheels are $2^{n}$. Another way of getting this number is the following.

It was already noticed that any signed wheel is switching equivalent to a signed wheel whose signature is a subset of $E(C_n)$. Also, by Lemma~\ref{two SGs equivalence}, any two signed wheels whose signatures are different subsets of $E(C_n)$ are switching non-equivalent. As the total number of subsets of $E(C_n)$ are $2^{n}$, there are $2^{n}$ switching non-equivalent signed wheels on $n+1$ vertices. However many of these $2^{n}$ signed wheels are isomorphic to each other. For this purpose, we have determined the value of $\psi_{p}(n)$, for $p=0,1,2,3,4,n-4,n-3,n-2,n-1,n$ and the value of $\psi(n)$, for $n=4,5,6,7,8,9,10$. The values of $\psi_{p}(n)$, for $p=5,6,...,n-5$ are still unknown.

\begin{figure}[h]
\begin{minipage}{0.3\textwidth}
\begin{tikzpicture}[scale=0.25]
        \node[vertex] (v0) at ({360/10*0 }:0.1cm) {};
		\node[vertex] (v3) at ({360/10*0 }:7cm) {};
		\node[vertex] (v2) at ({360/10*1 }:7cm) {};
		\node[vertex] (v1) at ({360/10 *2 }:7cm) {};
		\node[vertex] (v10) at ({360/10 *3 }:7cm) {};
		\node[vertex] (v9) at ({360/10 *4 }:7cm) {};
		\node[vertex] (v8) at ({360/10 *5}:7cm) {};
		\node[vertex] (v7) at ({360/10 *6 }:7cm) {};
		\node[vertex] (v6) at ({360/10 *7}:7cm) {};
		\node[vertex] (v5) at ({360/10 *8}:7cm) {};
		\node[vertex] (v4) at ({360/10 *9}:7cm) {};
        \draw [dashed] ({360/10 *0 +1.5}:7cm) arc  ({360/10*0+1.5}:{360/10*1 -1.5 }:7cm);
        \draw [dashed] ({360/10 *1 +1.5}:7cm) arc  ({360/10*1+1.5}:{360/10*2 -1.5 }:7cm);
        \draw ({360/10 *2 +1.5}:7cm) arc  ({360/10*2+1.5}:{360/10*3 -1.5 }:7cm);
        \draw ({360/10 *3 +1.5}:7cm) arc  ({360/10*3+1.5}:{360/10*4 -1.5 }:7cm);
        \draw ({360/10 *4 +1.5}:7cm) arc  ({360/10*4+1.5}:{360/10*5 -1.5 }:7cm);
        \draw ({360/10 *5 +1.5}:7cm) arc  ({360/10*5+1.5}:{360/10*6 -1.5 }:7cm);
        \draw ({360/10 *6 +1.5}:7cm) arc  ({360/10*6+1.5}:{360/10*7 -1.5 }:7cm);
        \draw [dashed] ({360/10 *7 +1.5}:7cm) arc  ({360/10*7+1.5}:{360/10*8 -1.5 }:7cm);
        \draw [dashed] ({360/10 *8 +1.5}:7cm) arc  ({360/10*8+1.5}:{360/10*9 -1.5 }:7cm);
        \draw [dashed] ({360/10 *9 +1.5}:7cm) arc  ({360/10*9+1.5}:{360/10*10 -1.5 }:7cm);
        
        \node[xshift=0.2cm, yshift=0.15cm] at ({360/10*0 }:1.5cm) {$v$};
        \node[xshift=0.25cm] at ({360/10*0 }:7cm) {$v_3$};
		\node[xshift=0.25cm, yshift=0.1cm] at ({360/10*1 }:7cm) {$v_2$};
		\node[xshift=0.1cm, yshift=0.25cm] at ({360/10 *2 }:7cm) {$v_1$};
		\node[xshift=-0.1cm, yshift=0.25cm] at ({360/10 *3 }:7cm) {$v_{10}$};
		\node[xshift=-0.2cm, yshift=0.15cm] at ({360/10 *4 }:7cm) {$v_9$};
		\node[xshift=-0.25cm] at ({360/10 *5}:7cm) {$v_8$};
		\node[xshift=-0.25cm, yshift=-0.15cm] at ({360/10 *6 }:7cm) {$v_7$};
		\node[xshift=-0.1cm, yshift=-0.3cm] at ({360/10 *7}:7cm) {$v_6$};
		\node[xshift=0.05cm, yshift=-0.3cm] at ({360/10 *8}:7cm) {$v_5$};
		\node[xshift=0.2cm, yshift=-0.2cm] at ({360/10 *9}:7cm) {$v_4$};
        \foreach \from/\to in {v0/v1,v0/v2,v0/v3,v0/v4,v0/v5,v0/v6,v0/v7,v0/v8,v0/v9,v0/v10} \draw (\from) -- (\to);

	\end{tikzpicture}
\end{minipage}
\hfill
\begin{minipage}{0.3\textwidth}
\begin{tikzpicture}[scale=0.25]
       \node[vertex] (v0) at ({360/10*0 }:0.1cm) {};
		\node[vertex] (v3) at ({360/10*0 }:7cm) {};
		\node[vertex] (v2) at ({360/10*1 }:7cm) {};
		\node[vertex] (v1) at ({360/10 *2 }:7cm) {};
		\node[vertex] (v10) at ({360/10 *3 }:7cm) {};
		\node[vertex] (v9) at ({360/10 *4 }:7cm) {};
		\node[vertex] (v8) at ({360/10 *5}:7cm) {};
		\node[vertex] (v7) at ({360/10 *6 }:7cm) {};
		\node[vertex] (v6) at ({360/10 *7}:7cm) {};
		\node[vertex] (v5) at ({360/10 *8}:7cm) {};
		\node[vertex] (v4) at ({360/10 *9}:7cm) {};
        \draw [dashed] ({360/10 *0 +1.5}:7cm) arc  ({360/10*0+1.5}:{360/10*1 -1.5 }:7cm);
        \draw [dashed] ({360/10 *1 +1.5}:7cm) arc  ({360/10*1+1.5}:{360/10*2 -1.5 }:7cm);
        \draw ({360/10 *2 +1.5}:7cm) arc  ({360/10*2+1.5}:{360/10*3 -1.5 }:7cm);
        \draw ({360/10 *3 +1.5}:7cm) arc  ({360/10*3+1.5}:{360/10*4 -1.5 }:7cm);
        \draw ({360/10 *4 +1.5}:7cm) arc  ({360/10*4+1.5}:{360/10*5 -1.5 }:7cm);
        \draw ({360/10 *5 +1.5}:7cm) arc  ({360/10*5+1.5}:{360/10*6 -1.5 }:7cm);
        \draw [dashed] ({360/10 *6 +1.5}:7cm) arc  ({360/10*6+1.5}:{360/10*7 -1.5 }:7cm);
        \draw ({360/10 *7 +1.5}:7cm) arc  ({360/10*7+1.5}:{360/10*8 -1.5 }:7cm);
        \draw [dashed] ({360/10 *8 +1.5}:7cm) arc  ({360/10*8+1.5}:{360/10*9 -1.5 }:7cm);
        \draw [dashed] ({360/10 *9 +1.5}:7cm) arc  ({360/10*9+1.5}:{360/10*10 -1.5 }:7cm);
        
        \node[xshift=0.2cm, yshift=0.15cm] at ({360/10*0 }:1.5cm) {$v$};
        \node[xshift=0.25cm] at ({360/10*0 }:7cm) {$v_3$};
		\node[xshift=0.25cm, yshift=0.1cm] at ({360/10*1 }:7cm) {$v_2$};
		\node[xshift=0.1cm, yshift=0.25cm] at ({360/10 *2 }:7cm) {$v_1$};
		\node[xshift=-0.1cm, yshift=0.25cm] at ({360/10 *3 }:7cm) {$v_{10}$};
		\node[xshift=-0.2cm, yshift=0.15cm] at ({360/10 *4 }:7cm) {$v_9$};
		\node[xshift=-0.25cm] at ({360/10 *5}:7cm) {$v_8$};
		\node[xshift=-0.25cm, yshift=-0.15cm] at ({360/10 *6 }:7cm) {$v_7$};
		\node[xshift=-0.1cm, yshift=-0.3cm] at ({360/10 *7}:7cm) {$v_6$};
		\node[xshift=0.05cm, yshift=-0.3cm] at ({360/10 *8}:7cm) {$v_5$};
		\node[xshift=0.2cm, yshift=-0.2cm] at ({360/10 *9}:7cm) {$v_4$};
        \foreach \from/\to in {v0/v1,v0/v2,v0/v3,v0/v4,v0/v5,v0/v6,v0/v7,v0/v8,v0/v9,v0/v10} \draw (\from) -- (\to);

	\end{tikzpicture}
\end{minipage}
\hfill
\begin{minipage}{0.3\textwidth}
\begin{tikzpicture}[scale=0.25]
		\node[vertex] (v0) at ({360/10*0 }:0.1cm) {};
		\node[vertex] (v3) at ({360/10*0 }:7cm) {};
		\node[vertex] (v2) at ({360/10*1 }:7cm) {};
		\node[vertex] (v1) at ({360/10 *2 }:7cm) {};
		\node[vertex] (v10) at ({360/10 *3 }:7cm) {};
		\node[vertex] (v9) at ({360/10 *4 }:7cm) {};
		\node[vertex] (v8) at ({360/10 *5}:7cm) {};
		\node[vertex] (v7) at ({360/10 *6 }:7cm) {};
		\node[vertex] (v6) at ({360/10 *7}:7cm) {};
		\node[vertex] (v5) at ({360/10 *8}:7cm) {};
		\node[vertex] (v4) at ({360/10 *9}:7cm) {};
        \draw [dashed] ({360/10 *0 +1.5}:7cm) arc  ({360/10*0+1.5}:{360/10*1 -1.5 }:7cm);
        \draw [dashed] ({360/10 *1 +1.5}:7cm) arc  ({360/10*1+1.5}:{360/10*2 -1.5 }:7cm);
        \draw ({360/10 *2 +1.5}:7cm) arc  ({360/10*2+1.5}:{360/10*3 -1.5 }:7cm);
        \draw ({360/10 *3 +1.5}:7cm) arc  ({360/10*3+1.5}:{360/10*4 -1.5 }:7cm);
        \draw ({360/10 *4 +1.5}:7cm) arc  ({360/10*4+1.5}:{360/10*5 -1.5 }:7cm);
        \draw [dashed] ({360/10 *5 +1.5}:7cm) arc  ({360/10*5+1.5}:{360/10*6 -1.5 }:7cm);
        \draw ({360/10 *6 +1.5}:7cm) arc  ({360/10*6+1.5}:{360/10*7 -1.5 }:7cm);
        \draw ({360/10 *7 +1.5}:7cm) arc  ({360/10*7+1.5}:{360/10*8 -1.5 }:7cm);
        \draw [dashed] ({360/10 *8 +1.5}:7cm) arc  ({360/10*8+1.5}:{360/10*9 -1.5 }:7cm);
        \draw [dashed] ({360/10 *9 +1.5}:7cm) arc  ({360/10*9+1.5}:{360/10*10 -1.5 }:7cm);
        
        \node[xshift=0.2cm, yshift=0.15cm] at ({360/10*0 }:1.5cm) {$v$};
        \node[xshift=0.25cm] at ({360/10*0 }:7cm) {$v_3$};
		\node[xshift=0.25cm, yshift=0.1cm] at ({360/10*1 }:7cm) {$v_2$};
		\node[xshift=0.1cm, yshift=0.25cm] at ({360/10 *2 }:7cm) {$v_1$};
		\node[xshift=-0.1cm, yshift=0.25cm] at ({360/10 *3 }:7cm) {$v_{10}$};
		\node[xshift=-0.2cm, yshift=0.15cm] at ({360/10 *4 }:7cm) {$v_9$};
		\node[xshift=-0.25cm] at ({360/10 *5}:7cm) {$v_8$};
		\node[xshift=-0.25cm, yshift=-0.15cm] at ({360/10 *6 }:7cm) {$v_7$};
		\node[xshift=-0.1cm, yshift=-0.3cm] at ({360/10 *7}:7cm) {$v_6$};
		\node[xshift=0.05cm, yshift=-0.3cm] at ({360/10 *8}:7cm) {$v_5$};
		\node[xshift=0.2cm, yshift=-0.2cm] at ({360/10 *9}:7cm) {$v_4$};
       \foreach \from/\to in {v0/v1,v0/v2,v0/v3,v0/v4,v0/v5,v0/v6,v0/v7,v0/v8,v0/v9,v0/v10} \draw (\from) -- (\to);

	\end{tikzpicture}
\end{minipage}
\hfill
\begin{minipage}{0.3\textwidth}
\begin{tikzpicture}[scale=0.25]

        \node[vertex] (v0) at ({360/10*0 }:0.1cm) {};
		\node[vertex] (v3) at ({360/10*0 }:7cm) {};
		\node[vertex] (v2) at ({360/10*1 }:7cm) {};
		\node[vertex] (v1) at ({360/10 *2 }:7cm) {};
		\node[vertex] (v10) at ({360/10 *3 }:7cm) {};
		\node[vertex] (v9) at ({360/10 *4 }:7cm) {};
		\node[vertex] (v8) at ({360/10 *5}:7cm) {};
		\node[vertex] (v7) at ({360/10 *6 }:7cm) {};
		\node[vertex] (v6) at ({360/10 *7}:7cm) {};
		\node[vertex] (v5) at ({360/10 *8}:7cm) {};
		\node[vertex] (v4) at ({360/10 *9}:7cm) {};
        \draw [dashed] ({360/10 *0 +1.5}:7cm) arc  ({360/10*0+1.5}:{360/10*1 -1.5 }:7cm);
        \draw [dashed] ({360/10 *1 +1.5}:7cm) arc  ({360/10*1+1.5}:{360/10*2 -1.5 }:7cm);
        \draw ({360/10 *2 +1.5}:7cm) arc  ({360/10*2+1.5}:{360/10*3 -1.5 }:7cm);
        \draw ({360/10 *3 +1.5}:7cm) arc  ({360/10*3+1.5}:{360/10*4 -1.5 }:7cm);
        \draw ({360/10 *4 +1.5}:7cm) arc  ({360/10*4+1.5}:{360/10*5 -1.5 }:7cm);
        \draw ({360/10 *5 +1.5}:7cm) arc  ({360/10*5+1.5}:{360/10*6 -1.5 }:7cm);
        \draw [dashed] ({360/10 *6 +1.5}:7cm) arc  ({360/10*6+1.5}:{360/10*7 -1.5 }:7cm);
        \draw [dashed] ({360/10 *7 +1.5}:7cm) arc  ({360/10*7+1.5}:{360/10*8 -1.5 }:7cm);
        \draw ({360/10 *8 +1.5}:7cm) arc  ({360/10*8+1.5}:{360/10*9 -1.5 }:7cm);
        \draw [dashed] ({360/10 *9 +1.5}:7cm) arc  ({360/10*9+1.5}:{360/10*10 -1.5 }:7cm);
        
        \node[xshift=0.2cm, yshift=0.15cm] at ({360/10*0 }:1.5cm) {$v$};
        \node[xshift=0.25cm] at ({360/10*0 }:7cm) {$v_3$};
		\node[xshift=0.25cm, yshift=0.1cm] at ({360/10*1 }:7cm) {$v_2$};
		\node[xshift=0.1cm, yshift=0.25cm] at ({360/10 *2 }:7cm) {$v_1$};
		\node[xshift=-0.1cm, yshift=0.25cm] at ({360/10 *3 }:7cm) {$v_{10}$};
		\node[xshift=-0.2cm, yshift=0.15cm] at ({360/10 *4 }:7cm) {$v_9$};
		\node[xshift=-0.25cm] at ({360/10 *5}:7cm) {$v_8$};
		\node[xshift=-0.25cm, yshift=-0.15cm] at ({360/10 *6 }:7cm) {$v_7$};
		\node[xshift=-0.1cm, yshift=-0.3cm] at ({360/10 *7}:7cm) {$v_6$};
		\node[xshift=0.05cm, yshift=-0.3cm] at ({360/10 *8}:7cm) {$v_5$};
		\node[xshift=0.2cm, yshift=-0.2cm] at ({360/10 *9}:7cm) {$v_4$};
       \foreach \from/\to in {v0/v1,v0/v2,v0/v3,v0/v4,v0/v5,v0/v6,v0/v7,v0/v8,v0/v9,v0/v10} \draw (\from) -- (\to);

	\end{tikzpicture}
\end{minipage}
\hfill
\begin{minipage}{0.3\textwidth}
\begin{tikzpicture}[scale=0.25]
		
	\node[vertex] (v0) at ({360/10*0 }:0.1cm) {};	
	 \node[vertex] (v3) at ({360/10*0 }:7cm) {};
		\node[vertex] (v2) at ({360/10*1 }:7cm) {};
		\node[vertex] (v1) at ({360/10 *2 }:7cm) {};
		\node[vertex] (v10) at ({360/10 *3 }:7cm) {};
		\node[vertex] (v9) at ({360/10 *4 }:7cm) {};
		\node[vertex] (v8) at ({360/10 *5}:7cm) {};
		\node[vertex] (v7) at ({360/10 *6 }:7cm) {};
		\node[vertex] (v6) at ({360/10 *7}:7cm) {};
		\node[vertex] (v5) at ({360/10 *8}:7cm) {};
		\node[vertex] (v4) at ({360/10 *9}:7cm) {};
        \draw [dashed] ({360/10 *0 +1.5}:7cm) arc  ({360/10*0+1.5}:{360/10*1 -1.5 }:7cm);
        \draw [dashed] ({360/10 *1 +1.5}:7cm) arc  ({360/10*1+1.5}:{360/10*2 -1.5 }:7cm);
        \draw ({360/10 *2 +1.5}:7cm) arc  ({360/10*2+1.5}:{360/10*3 -1.5 }:7cm);
        \draw ({360/10 *3 +1.5}:7cm) arc  ({360/10*3+1.5}:{360/10*4 -1.5 }:7cm);
        \draw ({360/10 *4 +1.5}:7cm) arc  ({360/10*4+1.5}:{360/10*5 -1.5 }:7cm);
        \draw [dashed] ({360/10 *5 +1.5}:7cm) arc  ({360/10*5+1.5}:{360/10*6 -1.5 }:7cm);
        \draw [dashed] ({360/10 *6 +1.5}:7cm) arc  ({360/10*6+1.5}:{360/10*7 -1.5 }:7cm);
        \draw ({360/10 *7 +1.5}:7cm) arc  ({360/10*7+1.5}:{360/10*8 -1.5 }:7cm);
        \draw ({360/10 *8 +1.5}:7cm) arc  ({360/10*8+1.5}:{360/10*9 -1.5 }:7cm);
        \draw [dashed] ({360/10 *9 +1.5}:7cm) arc  ({360/10*9+1.5}:{360/10*10 -1.5 }:7cm);
        
        \node[xshift=0.2cm, yshift=0.15cm] at ({360/10*0 }:1.5cm) {$v$};
        \node[xshift=0.25cm] at ({360/10*0 }:7cm) {$v_3$};
		\node[xshift=0.25cm, yshift=0.1cm] at ({360/10*1 }:7cm) {$v_2$};
		\node[xshift=0.1cm, yshift=0.25cm] at ({360/10 *2 }:7cm) {$v_1$};
		\node[xshift=-0.1cm, yshift=0.25cm] at ({360/10 *3 }:7cm) {$v_{10}$};
		\node[xshift=-0.2cm, yshift=0.15cm] at ({360/10 *4 }:7cm) {$v_9$};
		\node[xshift=-0.25cm] at ({360/10 *5}:7cm) {$v_8$};
		\node[xshift=-0.25cm, yshift=-0.15cm] at ({360/10 *6 }:7cm) {$v_7$};
		\node[xshift=-0.1cm, yshift=-0.3cm] at ({360/10 *7}:7cm) {$v_6$};
		\node[xshift=0.05cm, yshift=-0.3cm] at ({360/10 *8}:7cm) {$v_5$};
		\node[xshift=0.2cm, yshift=-0.2cm] at ({360/10 *9}:7cm) {$v_4$};
       \foreach \from/\to in {v0/v1,v0/v2,v0/v3,v0/v4,v0/v5,v0/v6,v0/v7,v0/v8,v0/v9,v0/v10} \draw (\from) -- (\to);

	\end{tikzpicture}
\end{minipage}
\hfill
\begin{minipage}{0.3\textwidth}
\begin{tikzpicture}[scale=0.25]
		\node[vertex] (v0) at ({360/10*0 }:0.1cm) {};
		\node[vertex] (v3) at ({360/10*0 }:7cm) {};
		\node[vertex] (v2) at ({360/10*1 }:7cm) {};
		\node[vertex] (v1) at ({360/10 *2 }:7cm) {};
		\node[vertex] (v10) at ({360/10 *3 }:7cm) {};
		\node[vertex] (v9) at ({360/10 *4 }:7cm) {};
		\node[vertex] (v8) at ({360/10 *5}:7cm) {};
		\node[vertex] (v7) at ({360/10 *6 }:7cm) {};
		\node[vertex] (v6) at ({360/10 *7}:7cm) {};
		\node[vertex] (v5) at ({360/10 *8}:7cm) {};
		\node[vertex] (v4) at ({360/10 *9}:7cm) {};
        \draw [dashed] ({360/10 *0 +1.5}:7cm) arc  ({360/10*0+1.5}:{360/10*1 -1.5 }:7cm);
        \draw [dashed] ({360/10 *1 +1.5}:7cm) arc  ({360/10*1+1.5}:{360/10*2 -1.5 }:7cm);
        \draw ({360/10 *2 +1.5}:7cm) arc  ({360/10*2+1.5}:{360/10*3 -1.5 }:7cm);
        \draw ({360/10 *3 +1.5}:7cm) arc  ({360/10*3+1.5}:{360/10*4 -1.5 }:7cm);
        \draw ({360/10 *4 +1.5}:7cm) arc  ({360/10*4+1.5}:{360/10*5 -1.5 }:7cm);
        \draw [dashed] ({360/10 *5 +1.5}:7cm) arc  ({360/10*5+1.5}:{360/10*6 -1.5 }:7cm);
        \draw ({360/10 *6 +1.5}:7cm) arc  ({360/10*6+1.5}:{360/10*7 -1.5 }:7cm);
        \draw [dashed] ({360/10 *7 +1.5}:7cm) arc  ({360/10*7+1.5}:{360/10*8 -1.5 }:7cm);
        \draw ({360/10 *8 +1.5}:7cm) arc  ({360/10*8+1.5}:{360/10*9 -1.5 }:7cm);
        \draw [dashed] ({360/10 *9 +1.5}:7cm) arc  ({360/10*9+1.5}:{360/10*10 -1.5 }:7cm);
        
        \node[xshift=0.2cm, yshift=0.15cm] at ({360/10*0 }:1.5cm) {$v$};
        \node[xshift=0.25cm] at ({360/10*0 }:7cm) {$v_3$};
		\node[xshift=0.25cm, yshift=0.1cm] at ({360/10*1 }:7cm) {$v_2$};
		\node[xshift=0.1cm, yshift=0.25cm] at ({360/10 *2 }:7cm) {$v_1$};
		\node[xshift=-0.1cm, yshift=0.25cm] at ({360/10 *3 }:7cm) {$v_{10}$};
		\node[xshift=-0.2cm, yshift=0.15cm] at ({360/10 *4 }:7cm) {$v_9$};
		\node[xshift=-0.25cm] at ({360/10 *5}:7cm) {$v_8$};
		\node[xshift=-0.25cm, yshift=-0.15cm] at ({360/10 *6 }:7cm) {$v_7$};
		\node[xshift=-0.1cm, yshift=-0.3cm] at ({360/10 *7}:7cm) {$v_6$};
		\node[xshift=0.05cm, yshift=-0.3cm] at ({360/10 *8}:7cm) {$v_5$};
		\node[xshift=0.2cm, yshift=-0.2cm] at ({360/10 *9}:7cm) {$v_4$};
        \foreach \from/\to in {v0/v1,v0/v2,v0/v3,v0/v4,v0/v5,v0/v6,v0/v7,v0/v8,v0/v9,v0/v10} \draw (\from) -- (\to);

	\end{tikzpicture}
\end{minipage}
\hfill
\begin{minipage}{0.3\textwidth}
\begin{tikzpicture}[scale=0.25]
		\node[vertex] (v0) at ({360/10*0 }:0.1cm) {};		
		\node[vertex] (v3) at ({360/10*0 }:7cm) {};
		\node[vertex] (v2) at ({360/10*1 }:7cm) {};
		\node[vertex] (v1) at ({360/10 *2 }:7cm) {};
		\node[vertex] (v10) at ({360/10 *3 }:7cm) {};
		\node[vertex] (v9) at ({360/10 *4 }:7cm) {};
		\node[vertex] (v8) at ({360/10 *5}:7cm) {};
		\node[vertex] (v7) at ({360/10 *6 }:7cm) {};
		\node[vertex] (v6) at ({360/10 *7}:7cm) {};
		\node[vertex] (v5) at ({360/10 *8}:7cm) {};
		\node[vertex] (v4) at ({360/10 *9}:7cm) {};
        \draw [dashed] ({360/10 *0 +1.5}:7cm) arc  ({360/10*0+1.5}:{360/10*1 -1.5 }:7cm);
        \draw [dashed] ({360/10 *1 +1.5}:7cm) arc  ({360/10*1+1.5}:{360/10*2 -1.5 }:7cm);
        \draw ({360/10 *2 +1.5}:7cm) arc  ({360/10*2+1.5}:{360/10*3 -1.5 }:7cm);
        \draw ({360/10 *3 +1.5}:7cm) arc  ({360/10*3+1.5}:{360/10*4 -1.5 }:7cm);
        \draw [dashed] ({360/10 *4 +1.5}:7cm) arc  ({360/10*4+1.5}:{360/10*5 -1.5 }:7cm);
        \draw ({360/10 *5 +1.5}:7cm) arc  ({360/10*5+1.5}:{360/10*6 -1.5 }:7cm);
        \draw ({360/10 *6 +1.5}:7cm) arc  ({360/10*6+1.5}:{360/10*7 -1.5 }:7cm);
        \draw [dashed] ({360/10 *7 +1.5}:7cm) arc  ({360/10*7+1.5}:{360/10*8 -1.5 }:7cm);
        \draw ({360/10 *8 +1.5}:7cm) arc  ({360/10*8+1.5}:{360/10*9 -1.5 }:7cm);
        \draw [dashed] ({360/10 *9 +1.5}:7cm) arc  ({360/10*9+1.5}:{360/10*10 -1.5 }:7cm);
        
        \node[xshift=0.2cm, yshift=0.15cm] at ({360/10*0 }:1.5cm) {$v$};
        \node[xshift=0.25cm] at ({360/10*0 }:7cm) {$v_3$};
		\node[xshift=0.25cm, yshift=0.1cm] at ({360/10*1 }:7cm) {$v_2$};
		\node[xshift=0.1cm, yshift=0.25cm] at ({360/10 *2 }:7cm) {$v_1$};
		\node[xshift=-0.1cm, yshift=0.25cm] at ({360/10 *3 }:7cm) {$v_{10}$};
		\node[xshift=-0.2cm, yshift=0.15cm] at ({360/10 *4 }:7cm) {$v_9$};
		\node[xshift=-0.25cm] at ({360/10 *5}:7cm) {$v_8$};
		\node[xshift=-0.25cm, yshift=-0.15cm] at ({360/10 *6 }:7cm) {$v_7$};
		\node[xshift=-0.1cm, yshift=-0.3cm] at ({360/10 *7}:7cm) {$v_6$};
		\node[xshift=0.05cm, yshift=-0.3cm] at ({360/10 *8}:7cm) {$v_5$};
		\node[xshift=0.2cm, yshift=-0.2cm] at ({360/10 *9}:7cm) {$v_4$};
        \foreach \from/\to in {v0/v1,v0/v2,v0/v3,v0/v4,v0/v5,v0/v6,v0/v7,v0/v8,v0/v9,v0/v10} \draw (\from) -- (\to);

	\end{tikzpicture}
\end{minipage}
\hfill
\begin{minipage}{0.3\textwidth}
\begin{tikzpicture}[scale=0.25]
		
		\node[vertex] (v0) at ({360/10*0 }:0.1cm) {};
		\node[vertex] (v3) at ({360/10*0 }:7cm) {};
		\node[vertex] (v2) at ({360/10*1 }:7cm) {};
		\node[vertex] (v1) at ({360/10 *2 }:7cm) {};
		\node[vertex] (v10) at ({360/10 *3 }:7cm) {};
		\node[vertex] (v9) at ({360/10 *4 }:7cm) {};
		\node[vertex] (v8) at ({360/10 *5}:7cm) {};
		\node[vertex] (v7) at ({360/10 *6 }:7cm) {};
		\node[vertex] (v6) at ({360/10 *7}:7cm) {};
		\node[vertex] (v5) at ({360/10 *8}:7cm) {};
		\node[vertex] (v4) at ({360/10 *9}:7cm) {};
        \draw [dashed] ({360/10 *0 +1.5}:7cm) arc  ({360/10*0+1.5}:{360/10*1 -1.5 }:7cm);
        \draw [dashed] ({360/10 *1 +1.5}:7cm) arc  ({360/10*1+1.5}:{360/10*2 -1.5 }:7cm);
        \draw ({360/10 *2 +1.5}:7cm) arc  ({360/10*2+1.5}:{360/10*3 -1.5 }:7cm);
        \draw [dashed] ({360/10 *3 +1.5}:7cm) arc  ({360/10*3+1.5}:{360/10*4 -1.5 }:7cm);
        \draw ({360/10 *4 +1.5}:7cm) arc  ({360/10*4+1.5}:{360/10*5 -1.5 }:7cm);
        \draw ({360/10 *5 +1.5}:7cm) arc  ({360/10*5+1.5}:{360/10*6 -1.5 }:7cm);
        \draw ({360/10 *6 +1.5}:7cm) arc  ({360/10*6+1.5}:{360/10*7 -1.5 }:7cm);
        \draw [dashed] ({360/10 *7 +1.5}:7cm) arc  ({360/10*7+1.5}:{360/10*8 -1.5 }:7cm);
        \draw ({360/10 *8 +1.5}:7cm) arc  ({360/10*8+1.5}:{360/10*9 -1.5 }:7cm);
        \draw [dashed] ({360/10 *9 +1.5}:7cm) arc  ({360/10*9+1.5}:{360/10*10 -1.5 }:7cm);
        
        \node[xshift=0.2cm, yshift=0.15cm] at ({360/10*0 }:1.5cm) {$v$};
        \node[xshift=0.25cm] at ({360/10*0 }:7cm) {$v_3$};
		\node[xshift=0.25cm, yshift=0.1cm] at ({360/10*1 }:7cm) {$v_2$};
		\node[xshift=0.1cm, yshift=0.25cm] at ({360/10 *2 }:7cm) {$v_1$};
		\node[xshift=-0.1cm, yshift=0.25cm] at ({360/10 *3 }:7cm) {$v_{10}$};
		\node[xshift=-0.2cm, yshift=0.15cm] at ({360/10 *4 }:7cm) {$v_9$};
		\node[xshift=-0.25cm] at ({360/10 *5}:7cm) {$v_8$};
		\node[xshift=-0.25cm, yshift=-0.15cm] at ({360/10 *6 }:7cm) {$v_7$};
		\node[xshift=-0.1cm, yshift=-0.3cm] at ({360/10 *7}:7cm) {$v_6$};
		\node[xshift=0.05cm, yshift=-0.3cm] at ({360/10 *8}:7cm) {$v_5$};
		\node[xshift=0.2cm, yshift=-0.2cm] at ({360/10 *9}:7cm) {$v_4$};
       \foreach \from/\to in {v0/v1,v0/v2,v0/v3,v0/v4,v0/v5,v0/v6,v0/v7,v0/v8,v0/v9,v0/v10} \draw (\from) -- (\to);

	\end{tikzpicture}
\end{minipage}
\hfill
\begin{minipage}{0.3\textwidth}
\begin{tikzpicture}[scale=0.25]
		
		\node[vertex] (v0) at ({360/10*0 }:0.1cm) {};
		\node[vertex] (v3) at ({360/10*0 }:7cm) {};
		\node[vertex] (v2) at ({360/10*1 }:7cm) {};
		\node[vertex] (v1) at ({360/10 *2 }:7cm) {};
		\node[vertex] (v10) at ({360/10 *3 }:7cm) {};
		\node[vertex] (v9) at ({360/10 *4 }:7cm) {};
		\node[vertex] (v8) at ({360/10 *5}:7cm) {};
		\node[vertex] (v7) at ({360/10 *6 }:7cm) {};
		\node[vertex] (v6) at ({360/10 *7}:7cm) {};
		\node[vertex] (v5) at ({360/10 *8}:7cm) {};
		\node[vertex] (v4) at ({360/10 *9}:7cm) {};
        \draw [dashed] ({360/10 *0 +1.5}:7cm) arc  ({360/10*0+1.5}:{360/10*1 -1.5 }:7cm);
        \draw [dashed] ({360/10 *1 +1.5}:7cm) arc  ({360/10*1+1.5}:{360/10*2 -1.5 }:7cm);
        \draw ({360/10 *2 +1.5}:7cm) arc  ({360/10*2+1.5}:{360/10*3 -1.5 }:7cm);
        \draw ({360/10 *3 +1.5}:7cm) arc  ({360/10*3+1.5}:{360/10*4 -1.5 }:7cm);
        \draw [dashed] ({360/10 *4 +1.5}:7cm) arc  ({360/10*4+1.5}:{360/10*5 -1.5 }:7cm);
        \draw ({360/10 *5 +1.5}:7cm) arc  ({360/10*5+1.5}:{360/10*6 -1.5 }:7cm);
        \draw [dashed] ({360/10 *6 +1.5}:7cm) arc  ({360/10*6+1.5}:{360/10*7 -1.5 }:7cm);
        \draw ({360/10 *7 +1.5}:7cm) arc  ({360/10*7+1.5}:{360/10*8 -1.5 }:7cm);
        \draw ({360/10 *8 +1.5}:7cm) arc  ({360/10*8+1.5}:{360/10*9 -1.5 }:7cm);
        \draw [dashed] ({360/10 *9 +1.5}:7cm) arc  ({360/10*9+1.5}:{360/10*10 -1.5 }:7cm);
        
        \node[xshift=0.2cm, yshift=0.15cm] at ({360/10*0 }:1.5cm) {$v$};
        \node[xshift=0.25cm] at ({360/10*0 }:7cm) {$v_3$};
		\node[xshift=0.25cm, yshift=0.1cm] at ({360/10*1 }:7cm) {$v_2$};
		\node[xshift=0.1cm, yshift=0.25cm] at ({360/10 *2 }:7cm) {$v_1$};
		\node[xshift=-0.1cm, yshift=0.25cm] at ({360/10 *3 }:7cm) {$v_{10}$};
		\node[xshift=-0.2cm, yshift=0.15cm] at ({360/10 *4 }:7cm) {$v_9$};
		\node[xshift=-0.25cm] at ({360/10 *5}:7cm) {$v_8$};
		\node[xshift=-0.25cm, yshift=-0.15cm] at ({360/10 *6 }:7cm) {$v_7$};
		\node[xshift=-0.1cm, yshift=-0.3cm] at ({360/10 *7}:7cm) {$v_6$};
		\node[xshift=0.05cm, yshift=-0.3cm] at ({360/10 *8}:7cm) {$v_5$};
		\node[xshift=0.2cm, yshift=-0.2cm] at ({360/10 *9}:7cm) {$v_4$};
        \foreach \from/\to in {v0/v1,v0/v2,v0/v3,v0/v4,v0/v5,v0/v6,v0/v7,v0/v8,v0/v9,v0/v10} \draw (\from) -- (\to);

	\end{tikzpicture}
\end{minipage}
\hfill
\begin{minipage}{0.3\textwidth}
\begin{tikzpicture}[scale=0.25]
		
		\node[vertex] (v0) at ({360/10*0 }:0.1cm) {};
		\node[vertex] (v3) at ({360/10*0 }:7cm) {};
		\node[vertex] (v2) at ({360/10*1 }:7cm) {};
		\node[vertex] (v1) at ({360/10 *2 }:7cm) {};
		\node[vertex] (v10) at ({360/10 *3 }:7cm) {};
		\node[vertex] (v9) at ({360/10 *4 }:7cm) {};
		\node[vertex] (v8) at ({360/10 *5}:7cm) {};
		\node[vertex] (v7) at ({360/10 *6 }:7cm) {};
		\node[vertex] (v6) at ({360/10 *7}:7cm) {};
		\node[vertex] (v5) at ({360/10 *8}:7cm) {};
		\node[vertex] (v4) at ({360/10 *9}:7cm) {};
        \draw [dashed] ({360/10 *0 +1.5}:7cm) arc  ({360/10*0+1.5}:{360/10*1 -1.5 }:7cm);
        \draw [dashed] ({360/10 *1 +1.5}:7cm) arc  ({360/10*1+1.5}:{360/10*2 -1.5 }:7cm);
        \draw ({360/10 *2 +1.5}:7cm) arc  ({360/10*2+1.5}:{360/10*3 -1.5 }:7cm);
        \draw ({360/10 *3 +1.5}:7cm) arc  ({360/10*3+1.5}:{360/10*4 -1.5 }:7cm);
        \draw ({360/10 *4 +1.5}:7cm) arc  ({360/10*4+1.5}:{360/10*5 -1.5 }:7cm);
        \draw [dashed] ({360/10 *5 +1.5}:7cm) arc  ({360/10*5+1.5}:{360/10*6 -1.5 }:7cm);
        \draw ({360/10 *6 +1.5}:7cm) arc  ({360/10*6+1.5}:{360/10*7 -1.5 }:7cm);
        \draw [dashed] ({360/10 *7 +1.5}:7cm) arc  ({360/10*7+1.5}:{360/10*8 -1.5 }:7cm);
        \draw [dashed] ({360/10 *8 +1.5}:7cm) arc  ({360/10*8+1.5}:{360/10*9 -1.5 }:7cm);
        \draw ({360/10 *9 +1.5}:7cm) arc  ({360/10*9+1.5}:{360/10*10 -1.5 }:7cm);
        
        \node[xshift=0.2cm, yshift=0.15cm] at ({360/10*0 }:1.5cm) {$v$};
        \node[xshift=0.25cm] at ({360/10*0 }:7cm) {$v_3$};
		\node[xshift=0.25cm, yshift=0.1cm] at ({360/10*1 }:7cm) {$v_2$};
		\node[xshift=0.1cm, yshift=0.25cm] at ({360/10 *2 }:7cm) {$v_1$};
		\node[xshift=-0.1cm, yshift=0.25cm] at ({360/10 *3 }:7cm) {$v_{10}$};
		\node[xshift=-0.2cm, yshift=0.15cm] at ({360/10 *4 }:7cm) {$v_9$};
		\node[xshift=-0.25cm] at ({360/10 *5}:7cm) {$v_8$};
		\node[xshift=-0.25cm, yshift=-0.15cm] at ({360/10 *6 }:7cm) {$v_7$};
		\node[xshift=-0.1cm, yshift=-0.3cm] at ({360/10 *7}:7cm) {$v_6$};
		\node[xshift=0.05cm, yshift=-0.3cm] at ({360/10 *8}:7cm) {$v_5$};
		\node[xshift=0.2cm, yshift=-0.2cm] at ({360/10 *9}:7cm) {$v_4$};
        \foreach \from/\to in {v0/v1,v0/v2,v0/v3,v0/v4,v0/v5,v0/v6,v0/v7,v0/v8,v0/v9,v0/v10} \draw (\from) -- (\to);

	\end{tikzpicture}
\end{minipage}
\hfill
\begin{minipage}{0.3\textwidth}
\begin{tikzpicture}[scale=0.25]
		
		\node[vertex] (v0) at ({360/10*0 }:0.1cm) {};
		\node[vertex] (v3) at ({360/10*0 }:7cm) {};
		\node[vertex] (v2) at ({360/10*1 }:7cm) {};
		\node[vertex] (v1) at ({360/10 *2 }:7cm) {};
		\node[vertex] (v10) at ({360/10 *3 }:7cm) {};
		\node[vertex] (v9) at ({360/10 *4 }:7cm) {};
		\node[vertex] (v8) at ({360/10 *5}:7cm) {};
		\node[vertex] (v7) at ({360/10 *6 }:7cm) {};
		\node[vertex] (v6) at ({360/10 *7}:7cm) {};
		\node[vertex] (v5) at ({360/10 *8}:7cm) {};
		\node[vertex] (v4) at ({360/10 *9}:7cm) {};
        \draw [dashed] ({360/10 *0 +1.5}:7cm) arc  ({360/10*0+1.5}:{360/10*1 -1.5 }:7cm);
        \draw [dashed] ({360/10 *1 +1.5}:7cm) arc  ({360/10*1+1.5}:{360/10*2 -1.5 }:7cm);
        \draw ({360/10 *2 +1.5}:7cm) arc  ({360/10*2+1.5}:{360/10*3 -1.5 }:7cm);
        \draw ({360/10 *3 +1.5}:7cm) arc  ({360/10*3+1.5}:{360/10*4 -1.5 }:7cm);
        \draw [dashed] ({360/10 *4 +1.5}:7cm) arc  ({360/10*4+1.5}:{360/10*5 -1.5 }:7cm);
        \draw ({360/10 *5 +1.5}:7cm) arc  ({360/10*5+1.5}:{360/10*6 -1.5 }:7cm);
        \draw ({360/10 *6 +1.5}:7cm) arc  ({360/10*6+1.5}:{360/10*7 -1.5 }:7cm);
        \draw [dashed] ({360/10 *7 +1.5}:7cm) arc  ({360/10*7+1.5}:{360/10*8 -1.5 }:7cm);
        \draw [dashed] ({360/10 *8 +1.5}:7cm) arc  ({360/10*8+1.5}:{360/10*9 -1.5 }:7cm);
        \draw ({360/10 *9 +1.5}:7cm) arc  ({360/10*9+1.5}:{360/10*10 -1.5 }:7cm);
        
        \node[xshift=0.2cm, yshift=0.15cm] at ({360/10*0 }:1.5cm) {$v$};
        \node[xshift=0.25cm] at ({360/10*0 }:7cm) {$v_3$};
		\node[xshift=0.25cm, yshift=0.1cm] at ({360/10*1 }:7cm) {$v_2$};
		\node[xshift=0.1cm, yshift=0.25cm] at ({360/10 *2 }:7cm) {$v_1$};
		\node[xshift=-0.1cm, yshift=0.25cm] at ({360/10 *3 }:7cm) {$v_{10}$};
		\node[xshift=-0.2cm, yshift=0.15cm] at ({360/10 *4 }:7cm) {$v_9$};
		\node[xshift=-0.25cm] at ({360/10 *5}:7cm) {$v_8$};
		\node[xshift=-0.25cm, yshift=-0.15cm] at ({360/10 *6 }:7cm) {$v_7$};
		\node[xshift=-0.1cm, yshift=-0.3cm] at ({360/10 *7}:7cm) {$v_6$};
		\node[xshift=0.05cm, yshift=-0.3cm] at ({360/10 *8}:7cm) {$v_5$};
		\node[xshift=0.2cm, yshift=-0.2cm] at ({360/10 *9}:7cm) {$v_4$};
       \foreach \from/\to in {v0/v1,v0/v2,v0/v3,v0/v4,v0/v5,v0/v6,v0/v7,v0/v8,v0/v9,v0/v10} \draw (\from) -- (\to);

	\end{tikzpicture}
\end{minipage}
\hfill
\begin{minipage}{0.3\textwidth}
\begin{tikzpicture}[scale=0.25]
		
		\node[vertex] (v0) at ({360/10*0 }:0.1cm) {};
		\node[vertex] (v3) at ({360/10*0 }:7cm) {};
		\node[vertex] (v2) at ({360/10*1 }:7cm) {};
		\node[vertex] (v1) at ({360/10 *2 }:7cm) {};
		\node[vertex] (v10) at ({360/10 *3 }:7cm) {};
		\node[vertex] (v9) at ({360/10 *4 }:7cm) {};
		\node[vertex] (v8) at ({360/10 *5}:7cm) {};
		\node[vertex] (v7) at ({360/10 *6 }:7cm) {};
		\node[vertex] (v6) at ({360/10 *7}:7cm) {};
		\node[vertex] (v5) at ({360/10 *8}:7cm) {};
		\node[vertex] (v4) at ({360/10 *9}:7cm) {};
        \draw [dashed] ({360/10 *0 +1.5}:7cm) arc  ({360/10*0+1.5}:{360/10*1 -1.5 }:7cm);
        \draw [dashed] ({360/10 *1 +1.5}:7cm) arc  ({360/10*1+1.5}:{360/10*2 -1.5 }:7cm);
        \draw ({360/10 *2 +1.5}:7cm) arc  ({360/10*2+1.5}:{360/10*3 -1.5 }:7cm);
        \draw ({360/10 *3 +1.5}:7cm) arc  ({360/10*3+1.5}:{360/10*4 -1.5 }:7cm);
        \draw [dashed] ({360/10 *4 +1.5}:7cm) arc  ({360/10*4+1.5}:{360/10*5 -1.5 }:7cm);
        \draw ({360/10 *5 +1.5}:7cm) arc  ({360/10*5+1.5}:{360/10*6 -1.5 }:7cm);
        \draw [dashed] ({360/10 *6 +1.5}:7cm) arc  ({360/10*6+1.5}:{360/10*7 -1.5 }:7cm);
        \draw [dashed] ({360/10 *7 +1.5}:7cm) arc  ({360/10*7+1.5}:{360/10*8 -1.5 }:7cm);
        \draw ({360/10 *8 +1.5}:7cm) arc  ({360/10*8+1.5}:{360/10*9 -1.5 }:7cm);
        \draw ({360/10 *9 +1.5}:7cm) arc  ({360/10*9+1.5}:{360/10*10 -1.5 }:7cm);
        
        \node[xshift=0.2cm, yshift=0.15cm] at ({360/10*0 }:1.5cm) {$v$};
        \node[xshift=0.25cm] at ({360/10*0 }:7cm) {$v_3$};
		\node[xshift=0.25cm, yshift=0.1cm] at ({360/10*1 }:7cm) {$v_2$};
		\node[xshift=0.1cm, yshift=0.25cm] at ({360/10 *2 }:7cm) {$v_1$};
		\node[xshift=-0.1cm, yshift=0.25cm] at ({360/10 *3 }:7cm) {$v_{10}$};
		\node[xshift=-0.2cm, yshift=0.15cm] at ({360/10 *4 }:7cm) {$v_9$};
		\node[xshift=-0.25cm] at ({360/10 *5}:7cm) {$v_8$};
		\node[xshift=-0.25cm, yshift=-0.15cm] at ({360/10 *6 }:7cm) {$v_7$};
		\node[xshift=-0.1cm, yshift=-0.3cm] at ({360/10 *7}:7cm) {$v_6$};
		\node[xshift=0.05cm, yshift=-0.3cm] at ({360/10 *8}:7cm) {$v_5$};
		\node[xshift=0.2cm, yshift=-0.2cm] at ({360/10 *9}:7cm) {$v_4$};
        \foreach \from/\to in {v0/v1,v0/v2,v0/v3,v0/v4,v0/v5,v0/v6,v0/v7,v0/v8,v0/v9,v0/v10} \draw (\from) -- (\to);

	\end{tikzpicture}
\end{minipage}
\hfill
\begin{minipage}{0.3\textwidth}
\begin{tikzpicture}[scale=0.25]
		
		\node[vertex] (v0) at ({360/10*0 }:0.1cm) {};
		\node[vertex] (v3) at ({360/10*0 }:7cm) {};
		\node[vertex] (v2) at ({360/10*1 }:7cm) {};
		\node[vertex] (v1) at ({360/10 *2 }:7cm) {};
		\node[vertex] (v10) at ({360/10 *3 }:7cm) {};
		\node[vertex] (v9) at ({360/10 *4 }:7cm) {};
		\node[vertex] (v8) at ({360/10 *5}:7cm) {};
		\node[vertex] (v7) at ({360/10 *6 }:7cm) {};
		\node[vertex] (v6) at ({360/10 *7}:7cm) {};
		\node[vertex] (v5) at ({360/10 *8}:7cm) {};
		\node[vertex] (v4) at ({360/10 *9}:7cm) {};
        \draw [dashed] ({360/10 *0 +1.5}:7cm) arc  ({360/10*0+1.5}:{360/10*1 -1.5 }:7cm);
        \draw [dashed] ({360/10 *1 +1.5}:7cm) arc  ({360/10*1+1.5}:{360/10*2 -1.5 }:7cm);
        \draw ({360/10 *2 +1.5}:7cm) arc  ({360/10*2+1.5}:{360/10*3 -1.5 }:7cm);
        \draw ({360/10 *3 +1.5}:7cm) arc  ({360/10*3+1.5}:{360/10*4 -1.5 }:7cm);
        \draw [dashed] ({360/10 *4 +1.5}:7cm) arc  ({360/10*4+1.5}:{360/10*5 -1.5 }:7cm);
        \draw ({360/10 *5 +1.5}:7cm) arc  ({360/10*5+1.5}:{360/10*6 -1.5 }:7cm);
        \draw [dashed] ({360/10 *6 +1.5}:7cm) arc  ({360/10*6+1.5}:{360/10*7 -1.5 }:7cm);
        \draw ({360/10 *7 +1.5}:7cm) arc  ({360/10*7+1.5}:{360/10*8 -1.5 }:7cm);
        \draw [dashed] ({360/10 *8 +1.5}:7cm) arc  ({360/10*8+1.5}:{360/10*9 -1.5 }:7cm);
        \draw ({360/10 *9 +1.5}:7cm) arc  ({360/10*9+1.5}:{360/10*10 -1.5 }:7cm);
        
        \node[xshift=0.2cm, yshift=0.15cm] at ({360/10*0 }:1.5cm) {$v$};
        \node[xshift=0.25cm] at ({360/10*0 }:7cm) {$v_3$};
		\node[xshift=0.25cm, yshift=0.1cm] at ({360/10*1 }:7cm) {$v_2$};
		\node[xshift=0.1cm, yshift=0.25cm] at ({360/10 *2 }:7cm) {$v_1$};
		\node[xshift=-0.1cm, yshift=0.25cm] at ({360/10 *3 }:7cm) {$v_{10}$};
		\node[xshift=-0.2cm, yshift=0.15cm] at ({360/10 *4 }:7cm) {$v_9$};
		\node[xshift=-0.25cm] at ({360/10 *5}:7cm) {$v_8$};
		\node[xshift=-0.25cm, yshift=-0.15cm] at ({360/10 *6 }:7cm) {$v_7$};
		\node[xshift=-0.1cm, yshift=-0.3cm] at ({360/10 *7}:7cm) {$v_6$};
		\node[xshift=0.05cm, yshift=-0.3cm] at ({360/10 *8}:7cm) {$v_5$};
		\node[xshift=0.2cm, yshift=-0.2cm] at ({360/10 *9}:7cm) {$v_4$};
        \foreach \from/\to in {v0/v1,v0/v2,v0/v3,v0/v4,v0/v5,v0/v6,v0/v7,v0/v8,v0/v9,v0/v10} \draw (\from) -- (\to);

	\end{tikzpicture}
\end{minipage}
v
\hfill
\begin{minipage}{0.3\textwidth}
\begin{tikzpicture}[scale=0.25]
		
		\node[vertex] (v0) at ({360/10*0 }:0.1cm) {};
		\node[vertex] (v3) at ({360/10*0 }:7cm) {};
		\node[vertex] (v2) at ({360/10*1 }:7cm) {};
		\node[vertex] (v1) at ({360/10 *2 }:7cm) {};
		\node[vertex] (v10) at ({360/10 *3 }:7cm) {};
		\node[vertex] (v9) at ({360/10 *4 }:7cm) {};
		\node[vertex] (v8) at ({360/10 *5}:7cm) {};
		\node[vertex] (v7) at ({360/10 *6 }:7cm) {};
		\node[vertex] (v6) at ({360/10 *7}:7cm) {};
		\node[vertex] (v5) at ({360/10 *8}:7cm) {};
		\node[vertex] (v4) at ({360/10 *9}:7cm) {};
        \draw [dashed] ({360/10 *0 +1.5}:7cm) arc  ({360/10*0+1.5}:{360/10*1 -1.5 }:7cm);
        \draw [dashed] ({360/10 *1 +1.5}:7cm) arc  ({360/10*1+1.5}:{360/10*2 -1.5 }:7cm);
        \draw ({360/10 *2 +1.5}:7cm) arc  ({360/10*2+1.5}:{360/10*3 -1.5 }:7cm);
        \draw [dashed] ({360/10 *3 +1.5}:7cm) arc  ({360/10*3+1.5}:{360/10*4 -1.5 }:7cm);
        \draw ({360/10 *4 +1.5}:7cm) arc  ({360/10*4+1.5}:{360/10*5 -1.5 }:7cm);
        \draw ({360/10 *5 +1.5}:7cm) arc  ({360/10*5+1.5}:{360/10*6 -1.5 }:7cm);
        \draw [dashed] ({360/10 *6 +1.5}:7cm) arc  ({360/10*6+1.5}:{360/10*7 -1.5 }:7cm);
        \draw ({360/10 *7 +1.5}:7cm) arc  ({360/10*7+1.5}:{360/10*8 -1.5 }:7cm);
        \draw [dashed] ({360/10 *8 +1.5}:7cm) arc  ({360/10*8+1.5}:{360/10*9 -1.5 }:7cm);
        \draw ({360/10 *9 +1.5}:7cm) arc  ({360/10*9+1.5}:{360/10*10 -1.5 }:7cm);
        
        \node[xshift=0.2cm, yshift=0.15cm] at ({360/10*0 }:1.5cm) {$v$};
        \node[xshift=0.25cm] at ({360/10*0 }:7cm) {$v_3$};
		\node[xshift=0.25cm, yshift=0.1cm] at ({360/10*1 }:7cm) {$v_2$};
		\node[xshift=0.1cm, yshift=0.25cm] at ({360/10 *2 }:7cm) {$v_1$};
		\node[xshift=-0.1cm, yshift=0.25cm] at ({360/10 *3 }:7cm) {$v_{10}$};
		\node[xshift=-0.2cm, yshift=0.15cm] at ({360/10 *4 }:7cm) {$v_9$};
		\node[xshift=-0.25cm] at ({360/10 *5}:7cm) {$v_8$};
		\node[xshift=-0.25cm, yshift=-0.15cm] at ({360/10 *6 }:7cm) {$v_7$};
		\node[xshift=-0.1cm, yshift=-0.3cm] at ({360/10 *7}:7cm) {$v_6$};
		\node[xshift=0.05cm, yshift=-0.3cm] at ({360/10 *8}:7cm) {$v_5$};
		\node[xshift=0.2cm, yshift=-0.2cm] at ({360/10 *9}:7cm) {$v_4$};
        \foreach \from/\to in {v0/v1,v0/v2,v0/v3,v0/v4,v0/v5,v0/v6,v0/v7,v0/v8,v0/v9,v0/v10} \draw (\from) -- (\to);

	\end{tikzpicture}
\end{minipage}
\hfill
\begin{minipage}{0.3\textwidth}
\begin{tikzpicture}[scale=0.25]
		
		\node[vertex] (v0) at ({360/10*0 }:0.1cm) {};
		\node[vertex] (v3) at ({360/10*0 }:7cm) {};
		\node[vertex] (v2) at ({360/10*1 }:7cm) {};
		\node[vertex] (v1) at ({360/10 *2 }:7cm) {};
		\node[vertex] (v10) at ({360/10 *3 }:7cm) {};
		\node[vertex] (v9) at ({360/10 *4 }:7cm) {};
		\node[vertex] (v8) at ({360/10 *5}:7cm) {};
		\node[vertex] (v7) at ({360/10 *6 }:7cm) {};
		\node[vertex] (v6) at ({360/10 *7}:7cm) {};
		\node[vertex] (v5) at ({360/10 *8}:7cm) {};
		\node[vertex] (v4) at ({360/10 *9}:7cm) {};
        \draw ({360/10 *0 +1.5}:7cm) arc  ({360/10*0+1.5}:{360/10*1 -1.5 }:7cm);
        \draw [dashed] ({360/10 *1 +1.5}:7cm) arc  ({360/10*1+1.5}:{360/10*2 -1.5 }:7cm);
        \draw ({360/10 *2 +1.5}:7cm) arc  ({360/10*2+1.5}:{360/10*3 -1.5 }:7cm);
        \draw [dashed] ({360/10 *3 +1.5}:7cm) arc  ({360/10*3+1.5}:{360/10*4 -1.5 }:7cm);
        \draw ({360/10 *4 +1.5}:7cm) arc  ({360/10*4+1.5}:{360/10*5 -1.5 }:7cm);
        \draw [dashed] ({360/10 *5 +1.5}:7cm) arc  ({360/10*5+1.5}:{360/10*6 -1.5 }:7cm);
        \draw ({360/10 *6 +1.5}:7cm) arc  ({360/10*6+1.5}:{360/10*7 -1.5 }:7cm);
        \draw [dashed] ({360/10 *7 +1.5}:7cm) arc  ({360/10*7+1.5}:{360/10*8 -1.5 }:7cm);
        \draw ({360/10 *8 +1.5}:7cm) arc  ({360/10*8+1.5}:{360/10*9 -1.5 }:7cm);
        \draw [dashed] ({360/10 *9 +1.5}:7cm) arc  ({360/10*9+1.5}:{360/10*10 -1.5 }:7cm);
        
        \node[xshift=0.2cm, yshift=0.15cm] at ({360/10*0 }:1.5cm) {$v$};
        \node[xshift=0.25cm] at ({360/10*0 }:7cm) {$v_3$};
		\node[xshift=0.25cm, yshift=0.1cm] at ({360/10*1 }:7cm) {$v_2$};
		\node[xshift=0.1cm, yshift=0.25cm] at ({360/10 *2 }:7cm) {$v_1$};
		\node[xshift=-0.1cm, yshift=0.25cm] at ({360/10 *3 }:7cm) {$v_{10}$};
		\node[xshift=-0.2cm, yshift=0.15cm] at ({360/10 *4 }:7cm) {$v_9$};
		\node[xshift=-0.25cm] at ({360/10 *5}:7cm) {$v_8$};
		\node[xshift=-0.25cm, yshift=-0.15cm] at ({360/10 *6 }:7cm) {$v_7$};
		\node[xshift=-0.1cm, yshift=-0.3cm] at ({360/10 *7}:7cm) {$v_6$};
		\node[xshift=0.05cm, yshift=-0.3cm] at ({360/10 *8}:7cm) {$v_5$};
		\node[xshift=0.2cm, yshift=-0.2cm] at ({360/10 *9}:7cm) {$v_4$};
        \foreach \from/\to in {v0/v1,v0/v2,v0/v3,v0/v4,v0/v5,v0/v6,v0/v7,v0/v8,v0/v9,v0/v10} \draw (\from) -- (\to);

	\end{tikzpicture}
\end{minipage}
\caption{Switching non-isomorphic signed $W_{10}$ with exactly five negative edges}\label{psi_5(10)}
\end{figure}

\end{document}